\def\F{\mathrm{F}}
\def\E{\mathrm{End}}
\def\c{\mathop{\mathrm{card}}}
\def\rad{\mathop{\mathrm{rad}}}
\def\int{\mathop{\mathrm{Int}}}
\newtheorem{theorem}{Theorem}[section]
\newtheorem{proposition}[theorem]{Proposition}
\newtheorem{lemma}[theorem]{Lemma}
\newtheorem{corollary}[theorem]{Corollary}
\theoremstyle{definition}
\newtheorem{remark}[theorem]{Remark}
\newtheorem{definition}[theorem]{Definition}
\newtheorem{example}[theorem]{Example}
\numberwithin{equation}{section}
\begin{document}

\title{Centrally Stable Algebras}

\begin{abstract}
We define an algebra $A$ to be centrally stable if, for every epimorphism $\varphi$ from $A$ to another algebra $B$, the center $Z(B)$ of $B$ is equal to $\varphi(Z(A))$, the image of the center of $A$. After providing some examples and basic observations, we consider in  somewhat greater detail central stability in tensor products of algebras, and finally establish our main result which states  that a finite-dimensional unital algebra $A$ over a perfect field $F$ is centrally stable if and only if $A$ is isomorphic to a direct product of algebras of the form $C_i\otimes_{F_i}A_i$, where  $F_i$ is a field extension of $F$, $C_i$ is a commutative $F_i$-algebra, and $A_i$ is a central simple $F_i$-algebra.
\end{abstract}

\author{Matej Bre\v sar}
\author{Ilja Gogi\'{c}}

\address{M. Bre\v sar,  Faculty of Mathematics and Physics,  University of Ljubljana,
 and Faculty of Natural Sciences and Mathematics, University 
of Maribor, Slovenia} \email{matej.bresar@fmf.uni-lj.si}

\address{I. Gogi\'{c}, Department of Mathematics, University of Zagreb,
Zagreb, Croatia}

\email{ilja@math.hr}

\thanks{The first named author was supported by the Slovenian Research Agency (ARRS) Grant P1-0288. 
The second named author was supported by 
 the Croatian Science Foundation under the project IP-2016-06-1046.}

\keywords{Centrally stable algebra, centrally stable element, center, radical, tensor product of algebras, finite-dimensional algebra}

\subjclass[2010]{16D70, 16K20, 16N20, 16U70}
\maketitle

\section{Introduction}

Let $A$ and $B$ be algebras over a field with centers $Z(A)$ and $Z(B)$. If $\varphi:A\to B$ is an epimorphism, then  $\varphi(Z(A))$ is obviously contained in, but is not necessarily equal to $Z(B)$.
In 1971, Vesterstr\o m \cite{V} proved that  if $A$ is a unital $C^*$-algebra, then every $*$-epimorphism $\varphi$  from $A$ to another unital  $C^*$-algebra $B$ satisfies $\varphi(Z(A)) = Z(B)$ if and only if $A$ is weakly central (i.e., for any pair of maximal ideals  $M$ and $N$  of $A$, 
$M \cap Z(A) =N \cap Z(A)$ implies $M=N$ 
\cite{Mis}). This result is both beautiful and useful, and therefore gives rise to a question whether   algebras satisfying such a condition can be described in some other contexts. The setting in which we will work is purely algebraic and not restricted to algebras with involution. 
We propose to consider the following class of algebras.

\begin{definition} \label{d1}An {\em algebra}
 $A$ is {\em centrally stable} if, for every algebra epimorphism $\varphi:A\to B$, 
$\varphi(Z(A))=Z(B)$. 
\end{definition}
Thus, speaking loosely, homomorphic images of a centrally stable algebra $A$ cannot have larger centers than $A$. It should be mentioned that  $C ^*$-algebras satisfying an analogous condition appear under a different name, namely, $C ^*$-algebras having
  the {\em center-quotient property} (see, e.g., \cite[p. 2671]{ALT}).

We also introduce a local version of Definition \ref{d1}.

\begin{definition} \label{d2} An {\em element} $a$ of an algebra  $A$
is  {\em centrally stable} if,  for every algebra epimorphism $\varphi:A\to B$, $\varphi(a)\in Z(B)$ implies $a\in Z(A) +\ker\varphi$.
\end{definition}

Note that $A$ is a centrally stable algebra if and only if every $a\in A$  
is a centrally stable element.
We remark that both definitions obviously also  make sense for rings, but it will more convenient for us to work with algebras over  fields.

The paper is organized as follows. In Section \ref{s2}, we record various basic facts and examine several examples. In Section \ref{s3}, we study  central stability of elements in the tensor product of an arbitrary unital algebra with a central simple algebra. 
The main result is proved in Section \ref{s4}. We show that a finite-dimensional unital algebra $A$ over a perfect field $F$ is centrally stable if and only if 
$$A\cong (C_1\otimes_{F_1} A_1)\times \cdots\times  (C_r\otimes_{F_r} A_r),$$
 where each $F_i$ is a field extension of $F$,
$C_i$ is a commutative  $F_i$-algebra, and $A_i$ is a central simple $F_i$-algebra.
The proof uses the classical theory of finite-dimensional algebras, including  Wedderburn's structure theory, the Skolem-Noether Theorem, and the   Artin-Whaples Theorem.

\section{Remarks and Examples} \label{s2}

We first introduce some notation. Throughout, $F$ denotes a field and, unless specified otherwise, our algebras will be algebras over $F$.    
Given any algebra $A$, we write $Z(A)$ for the center of $A$. If $X$ is a subset of $A$, then 
 Id$(X)$ denotes the ideal of $A$ generated by $X$. As usual, $[x,y]$ stands for $xy-yx$.
 
The next proposition presents some alternative definitions of  centrally stable algebras.

\begin{proposition}\label{ldef}
Let $A$ be an algebra. The following conditions are equivalent:
\begin{enumerate}
\item[{\rm (i)}] $A$ is centrally stable.
\item[{\rm (ii)}] For every ideal $I$ of $A$, $Z(A/I)=(Z(A)+I)/I$.
\item[{\rm (iii)}] For every $a\in A$, $a\in Z(A) +{\rm Id}([a,A])$.
\end{enumerate}
\end{proposition}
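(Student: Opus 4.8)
The plan is to prove the cycle of equivalences by first disposing of (i)$\Leftrightarrow$(ii), which is a direct translation between epimorphisms and quotient maps, and then handling (ii)$\Leftrightarrow$(iii) by identifying, for a fixed $a$, the smallest ideal modulo which $a$ becomes central.

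For (i)$\Leftrightarrow$(ii): every epimorphism $\varphi:A\to B$ factors through the quotient map $\pi:A\to A/\ker\varphi$ via an isomorphism $\bar\varphi:A/\ker\varphi\to B$, and $\bar\varphi$ carries $\pi(Z(A))=(Z(A)+\ker\varphi)/\ker\varphi$ onto $\varphi(Z(A))$ and $Z(A/\ker\varphi)$ onto $Z(B)$. Hence $\varphi(Z(A))=Z(B)$ is equivalent to $(Z(A)+\ker\varphi)/\ker\varphi = Z(A/\ker\varphi)$. Since conversely every quotient map $A\to A/I$ is an epimorphism, (i) and (ii) say the same thing.

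For (ii)$\Leftrightarrow$(iii), the key observation is: for $a\in A$ and an ideal $I$ of $A$, the coset $a+I$ lies in $Z(A/I)$ if and only if $[a,x]\in I$ for all $x\in A$, i.e.\ if and only if ${\rm Id}([a,A])\subseteq I$ (writing $[a,A]=\{[a,x]:x\in A\}$). Granting this, (ii)$\Rightarrow$(iii) follows by applying (ii) to the ideal $I_a:={\rm Id}([a,A])$: then $a+I_a\in Z(A/I_a)=(Z(A)+I_a)/I_a$, so $a\in Z(A)+I_a=Z(A)+{\rm Id}([a,A])$. Conversely, assume (iii) and fix an ideal $I$. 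The inclusion $(Z(A)+I)/I\subseteq Z(A/I)$ is clear. If $a+I\in Z(A/I)$, then ${\rm Id}([a,A])\subseteq I$ by the observation, so (iii) gives $a\in Z(A)+{\rm Id}([a,A])\subseteq Z(A)+I$, i.e.\ $a+I\in (Z(A)+I)/I$; thus $Z(A/I)\subseteq (Z(A)+I)/I$ and (ii) holds.

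No step here presents a genuine obstacle; the content is just the remark that ${\rm Id}([a,A])$ is the minimal ideal modulo which $a$ becomes central, combined with the first isomorphism theorem. The same reasoning, applied with $I=\ker\varphi$, shows in addition that an element $a$ is centrally stable in the sense of Definition \ref{d2} precisely when $a\in Z(A)+{\rm Id}([a,A])$, so that (i)$\Leftrightarrow$(iii) is also immediate from the elementwise characterization noted after Definition \ref{d2}; we record condition (iii) separately because it is the form best suited to the local analysis carried out in the later sections.
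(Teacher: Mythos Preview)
Your proof is correct and follows essentially the same approach as the paper's, which proves the cycle (i)$\Rightarrow$(ii)$\Rightarrow$(iii)$\Rightarrow$(i) using exactly the same ideas: the canonical epimorphism for (i)$\Rightarrow$(ii), the choice $I={\rm Id}([a,A])$ for (ii)$\Rightarrow$(iii), and the observation $[a,A]\subseteq\ker\varphi$ for (iii)$\Rightarrow$(i). Your version is simply more detailed and arranges the implications as two biconditionals rather than a cycle.
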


\begin{proof} (i)$\implies$(ii). Consider the canonical epimorphism $\varphi:A\to A/I$.

(ii)$\implies$(iii). Take ${\rm Id}([a,A])$ for $I$ in (ii).

(iii)$\implies$(i). Let  $\varphi:A\to B$ be an epimorphism and let $\varphi(a)\in Z(B)$. Then $[a,A]\subseteq \ker\varphi$ and so, by (iii), $a\in Z(A) +\ker\varphi$. Thus, $\varphi(a)\in \varphi(Z(A))$.
\end{proof}

From (iii) we see that a necessary condition for $A$ to be centrally stable is that it is equal to the sum of its center and its commutator ideal.

A similar proposition holds for centrally stable elements. The proof requires only obvious changes, so we omit it.

\begin{proposition}\label{pe}
Let $A$ be an arbitrary algebra and let $a\in A$. The following conditions are equivalent:
\begin{enumerate}
\item[{\rm (i)}] $a$ is  centrally stable.
\item[{\rm (ii)}] For every ideal $I$ of $A$, $a+I\in Z(A/I)$ implies $a\in Z(A)+I$.
\item[{\rm (iii)}]  $a\in Z(A) + {\rm Id}([a,A])$.
\end{enumerate}
\end{proposition}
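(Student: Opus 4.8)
The plan is to follow exactly the cyclic scheme (i)$\implies$(ii)$\implies$(iii)$\implies$(i) used in the proof of Proposition \ref{ldef}, localized from "all of $A$" to the single element $a$. Nothing beyond elementary ideal theory is needed.

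For (i)$\implies$(ii), I would fix an ideal $I$ and apply Definition \ref{d2} to the canonical epimorphism $\varphi:A\to A/I$, whose kernel is $I$: the hypothesis $a+I\in Z(A/I)$ says precisely $\varphi(a)\in Z(A/I)$, so central stability of $a$ gives $a\in Z(A)+\ker\varphi=Z(A)+I$. For (ii)$\implies$(iii), I would specialize (ii) to $I={\rm Id}([a,A])$. One first checks that $a+I$ is central in $A/I$: for every $x\in A$ we have $[a,x]\in[a,A]\subseteq I$, hence $[a+I,x+I]=[a,x]+I=I$, and since every element of $A/I$ has the form $x+I$, this shows $a+I\in Z(A/I)$; now (ii) yields $a\in Z(A)+I=Z(A)+{\rm Id}([a,A])$. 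For (iii)$\implies$(i), let $\varphi:A\to B$ be an epimorphism with $\varphi(a)\in Z(B)$. Using surjectivity, every $b\in B$ is $\varphi(x)$ for some $x\in A$, so $0=[\varphi(a),\varphi(x)]=\varphi([a,x])$, i.e.\ $[a,A]\subseteq\ker\varphi$; since $\ker\varphi$ is an ideal, ${\rm Id}([a,A])\subseteq\ker\varphi$, and (iii) gives $a\in Z(A)+{\rm Id}([a,A])\subseteq Z(A)+\ker\varphi$, which is what Definition \ref{d2} demands.

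I do not expect any genuine obstacle here — this is the reason the authors announce that the proof "requires only obvious changes" and omit it. The only two points that deserve a moment's attention are the verification that $a+{\rm Id}([a,A])$ lands in the center of the quotient (which rests on the trivial observation that the quotient is generated, as a set, by the cosets $x+I$), and the use of surjectivity of $\varphi$ in the last implication to pass from "$\varphi(a)$ commutes with every element of $B$" to "$[a,A]\subseteq\ker\varphi$". Both are routine, so the write-up should be at most a few lines, strictly parallel to the proof of Proposition \ref{ldef}.
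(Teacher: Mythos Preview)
Your proposal is correct and follows exactly the approach the paper intends: the authors explicitly omit the proof, saying it ``requires only obvious changes'' from that of Proposition~\ref{ldef}, and your cyclic argument (i)$\implies$(ii)$\implies$(iii)$\implies$(i) is precisely the localization of that proof to a single element. The two points you flag as deserving attention are indeed the only ones, and both are routine, so your write-up matches what the paper implicitly has in mind.
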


We proceed to examples. First we record the two most obvious ones.

\begin{example}\label{et1}
Every simple algebra is centrally stable.
\end{example}

\begin{example}\label{et2}
Every commutative algebra is centrally stable.
\end{example}

More generally, elements from the center of any algebra $A$  are always centrally stable. It may be that $A$ has no other centrally stable elements.

\begin{example} Let $X$ be a set with at least two elements and let $A=F\langle X\rangle$ be the free algebra on $X$. Then only scalar multiples of $1$ are centrally stable in $A$. Indeed, take $f_0\in A$ that is not a scalar multiple of $1$, and let $m$ be a monomial of $f_0$ of lowest positive degree. Note that for any $f\in A$, 
the commutator $[f_0,f]$ is either $0$ or is a sum of monomials that have higher degree than $m$. But then the same is true for all polynomials in Id$([f_0,A])$. Hence, $f_0\notin Z(A)+{\rm Id}([f_0,A])$, so  $f_0$ is not centrally stable by Proposition \ref{pe}.
\end{example}

By $M_n(A)$ we denote the algebra of all $n\times n$ matrices with entries in the algebra $A$. We write $e_{ij}$ for the standard matrix units in $M_n(A)$.

\begin{example}\label{e33}
Let $S$ be the subalgebra of  $M_n(F)$ consisting of all  matrices that can be written as a sum of a scalar matrix and a strictly upper triangular matrix.
Its center consists of 
 matrices of the form $\lambda 1 + \mu e_{1n}$ where $\lambda,\mu\in F$. It is easy to see that these are the only matrices that satisfy condition (iii) of Proposition \ref{pe}. 
Thus, again, only central elements 
 are centrally stable in $S$ (see also Proposition \ref{prad}).
\end{example}

\begin{example}\label{e33t}
Let  $T$ be the subalgebra of  $M_n(F)$ consisting of all 
upper triangular matrices. Using the fact that every strictly upper triangular matrix $s$ satisfies  $s=\sum_{i=1}^n e_{ii}[e_{ii},s]$ one easily proves that
 $a\in T$ satisfies condition (iii) of Proposition \ref{pe} (that is, $a$ is centrally stable) if and only if 
$a$ 
is   a sum of a scalar matrix and a strictly upper triangular matrix. Thus, the set of centrally stable elements in $T$ is exactly the algebra $S$ from the preceding example, which, however, is not centrally stable for $n\ge 3$.
\end{example}

Our next goal is to provide examples of centrally stable algebras that are not as obvious as those from Examples \ref{et1} and \ref{et2}.
Our main example will be  $\E_F(V)$, the algebra of all linear operators of a (possibly infinite-dimensional) vector space $V$.
We first remark that if $\dim_F V\geq \aleph_0$, then 
every proper ideal of $\E_F(V)$ is of the form 
$$\F_\kappa (V)=\{T \in \E_F(V) : \ \dim_F T(V)<\kappa\}$$
for some cardinal number $\aleph_0 \leq \kappa\leq \dim_F V$ 
(see, e.g., \cite[Corollary 3.4]{GP}).
The following lemma will lead to the desired conclusion.

\begin{lemma}\label{infset}
Let $V$ be an infinite-dimensional vector space over a field $F$ and let $\aleph_0 \leq \kappa \leq \dim V$. If $T  \in \mathrm{End}_F(V)$ is such that $T \notin \F_\kappa(V)+F1$, then 
there exists a subset $\mathcal{S}$ of $V$ of cardinality $\kappa$ such that the set $\mathcal{S}\cup T(\mathcal{S})$ is linearly independent. 
\end{lemma}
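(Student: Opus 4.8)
The plan is to build the set $\mathcal{S}$ by transfinite recursion of length $\kappa$, adding one vector at a time while maintaining linear independence of the partial set together with its image under $T$. First I would fix, once and for all, the meaning of the hypothesis $T\notin \F_\kappa(V)+F1$: it says that for every scalar $\lambda\in F$, the operator $T-\lambda 1$ has rank at least $\kappa$, i.e. $\dim_F(T-\lambda 1)(V)\geq\kappa$. This is the only form of the hypothesis I will use, and the case $\lambda=0$ already gives $\dim_F T(V)\geq\kappa$.

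Now enumerate the recursion by ordinals $\alpha<\kappa$. Suppose that for some $\beta<\kappa$ we have chosen vectors $(v_\alpha)_{\alpha<\beta}$ such that $\{v_\alpha:\alpha<\beta\}\cup\{Tv_\alpha:\alpha<\beta\}$ is linearly independent; call $U_\beta$ its span, a subspace of dimension $2\,|\beta|<\kappa$ (or finite, if $\beta$ is finite). I want to find $v_\beta\in V$ so that $\{v_\alpha:\alpha\le\beta\}\cup\{Tv_\alpha:\alpha\le\beta\}$ is still independent; equivalently, $v_\beta\notin U_\beta$ and $Tv_\beta\notin U_\beta+Fv_\beta$. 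Suppose no such $v_\beta$ exists. Then every $v\in V\setminus U_\beta$ satisfies $Tv\in U_\beta+Fv$, i.e. $Tv=u+\lambda(v)v$ for some $u\in U_\beta$ and some scalar $\lambda(v)\in F$ depending on $v$. A standard linear-algebra argument — the same one used to show that an operator all of whose vectors are eigenvectors is scalar — shows that $\lambda(v)$ must in fact be a single constant $\lambda$, at least modulo the obstruction coming from $U_\beta$: for independent $v,w\in V\setminus U_\beta$ with $v+w\notin U_\beta$, comparing $T(v+w)$ with $Tv+Tw$ forces $\lambda(v)=\lambda(w)$ unless one of the differences lands in $U_\beta$, and the finitely-many (indeed $<\kappa$-many) exceptional directions can be absorbed. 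The upshot is that $(T-\lambda 1)(V)\subseteq U_\beta+ (\text{small correction})$, so $\dim_F(T-\lambda 1)(V)\leq \dim_F U_\beta<\kappa$, contradicting the hypothesis. Hence a suitable $v_\beta$ exists, the recursion proceeds through all $\alpha<\kappa$, and $\mathcal{S}=\{v_\alpha:\alpha<\kappa\}$ works: it has cardinality $\kappa$ and $\mathcal{S}\cup T(\mathcal{S})$ is linearly independent, being the union of an increasing chain of linearly independent sets.

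I expect the main obstacle to be the "$\lambda(v)$ is constant" step carried out in the \emph{relative} setting modulo the moving subspace $U_\beta$, rather than the clean classical statement "$Tv\in Fv$ for all $v$ implies $T\in F1$". The care needed is to handle the exceptional vectors $v$ for which $v+w\in U_\beta$ or $Tv,Tw$ interact with $U_\beta$, and to check that the set of "bad" one-dimensional directions, together with $U_\beta$ itself, still spans a subspace of dimension $<\kappa$, so that the contradiction with $\dim_F(T-\lambda 1)(V)\geq\kappa$ genuinely goes through. Once that is pinned down, everything else — the bookkeeping of the transfinite recursion, the cardinal arithmetic $2\,|\beta|<\kappa$ for $\beta<\kappa$ when $\kappa$ is infinite, and taking unions at limit stages — is routine. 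An alternative, perhaps cleaner, route that avoids the moving subspace is to choose at the outset a single scalar $\lambda_0$ (e.g. $\lambda_0=0$) with $\dim(T-\lambda_0 1)(V)\geq\kappa$, replace $T$ by $T-\lambda_0 1$, and then argue directly that within a subspace $W$ on which $T$ is injective with $\dim T(W)\geq\kappa$ one can extract the desired independent family; I would try this simplification first and fall back on the recursion above if the image-of-$W$ independence is awkward to arrange.
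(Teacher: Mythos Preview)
Your transfinite recursion is equivalent to the paper's Zorn's Lemma argument (maximal $\mathcal{S}$ with $|\mathcal{S}|\le\kappa$ and $\mathcal{S}\cup T(\mathcal{S})$ independent), and the contradiction step is the same. The obstacle you flag dissolves if, as the paper does, you fix once and for all a linear complement $Z$ of $U_\beta$ in $V$ and restrict attention to $z\in Z$: writing $P$ for the projection onto $Z$ along $U_\beta$, the condition $Tz\in U_\beta+Fz$ becomes $TPx=(1-P)TPx+\delta(x)Px$, the $U_\beta$- and $Z$-components are cleanly separated, linearity forces $\delta$ to be a single constant on $Z$ with no exceptional vectors, and then $T-\delta\,1=T(1-P)+(1-P)TP-\delta(1-P)$ has image contained in $T(U_\beta)+U_\beta$, of dimension $<\kappa$.
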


\begin{proof}
First note that there is a vector $x \in V$ such that the set $\{x, Tx\}$ is linearly independent. If this was not true, then for each $x \in V$ there would exist $\alpha(x)\in F$ such that  $Tx=\alpha(x)x$. Using the linearity of $T$ it is easy to check that $x \mapsto \alpha(x)$ must be a constant function, so $T$ would be a scalar operator. 

Let $\mathfrak{F}$ be the family of all subsets $\mathcal{S}$ of $V$  with $\c(\mathcal{S})\leq \kappa$ such that the set $\mathcal{S}\cup T(\mathcal{S})$ is linearly independent. As noted in the preceding paragraph, $\mathfrak{F} \neq \emptyset$. We order $\mathfrak{F}$ by the usual set-theoretic inclusion. If $\mathfrak{L}$ is any chain in $\mathfrak{F}$, then obviously $\bigcup_{\mathcal{S}\in \mathfrak{L}} \mathcal{S}$ is an upper bound in $\mathfrak{F}$. Hence, by Zorn's Lemma, there is a maximal element $\mathcal{M}$ of $\mathfrak{F}$. We claim that $\c(\mathcal{M})=\kappa$. Suppose that $\c(\mathcal{M})<\kappa$. Define $W$ to be the $F$-linear span of $\mathcal{M} \cup T(\mathcal{M})$ and choose a subspace $Z$ of $V$ such that 
$V=W \oplus Z$. By maximality of $\mathcal{M}$,  the set $\mathcal{M}\cup T(\mathcal{M}) \cup \{z, Tz\}$ is linearly dependent for any $z \in Z$, so $$Tz \in  W +Fz.$$
 Let $P$ be the idempotent in $\mathrm{End}_F(V)$ with image $Z$ and kernel $W$.
Then for each $x \in V$ there is a scalar $\delta(x)$ such that
$$TPx=(1-P)TPx + \delta(x)Px.$$
Again, using linearity, we conclude that the scalar-valued function $x \mapsto \delta(x)$ must be constant on $Z$; denote this scalar by $\delta$. Then $$TP=(1-P)TP+\delta P,$$ so 
\begin{eqnarray*}
T&=&T(1-P)+TP=T(1-P)+(1-P)TP + \delta P \\
&=&T(1-P)+(1-P)TP -\delta(1-P) +\delta 1 = R+\delta 1
\end{eqnarray*}
where $R=T(1-P)+(1-P)TP -\delta(1-P)$. Since $$\dim_F(1-P)(V)=\dim_F W=\c(\mathcal{M})< \kappa,$$ we arrive at a contradiction  that $R \in \F_\kappa(V)$.
\end{proof}

Recall that an algebra $A$ is said to be {\em central} if it is unital and its center consists of scalar multiples of 
unity. In view of Proposition \ref{ldef}, a central algebra is centrally stable if and only if $A/I$ is a central algebra for every ideal $I$ of $A$.
Note that the algebra $\mathrm{End}_F(V)$  is central.

\begin{proposition}\label{End}
Let $V$ be any vector space over a field $F$. Then
the algebra $\mathrm{End}_F(V)$ is centrally stable.
\end{proposition}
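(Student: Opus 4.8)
The plan is to use the characterization from Proposition \ref{ldef}: since $\mathrm{End}_F(V)$ is a central algebra, it suffices to show that $\mathrm{End}_F(V)/I$ is again a central algebra for every (proper, two-sided) ideal $I$. When $V$ is finite-dimensional, $\mathrm{End}_F(V) \cong M_n(F)$ is simple, hence centrally stable by Example \ref{et1}, so I may assume $\dim_F V \geq \aleph_0$. In that case the ideal structure is completely known: by the result cited before Lemma \ref{infset}, every proper ideal $I$ of $\mathrm{End}_F(V)$ equals $\F_\kappa(V)$ for some cardinal $\aleph_0 \leq \kappa \leq \dim_F V$. So the whole problem reduces to showing that $Z\bigl(\mathrm{End}_F(V)/\F_\kappa(V)\bigr)$ consists only of the scalars, i.e. of the images of $F1$.

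The key step is exactly where Lemma \ref{infset} comes in. Suppose $T \in \mathrm{End}_F(V)$ is such that $T + \F_\kappa(V)$ is central in the quotient; I must show $T \in \F_\kappa(V) + F1$. Arguing by contraposition, assume $T \notin \F_\kappa(V) + F1$. Then Lemma \ref{infset} provides a subset $\mathcal{S} = \{v_i : i \in \Lambda\}$ of $V$ with $\c(\Lambda) = \kappa$ such that $\mathcal{S} \cup T(\mathcal{S})$ is linearly independent. The idea now is to build an operator $U$ that "detects" the non-centrality of $T$ modulo $\F_\kappa(V)$: define $U$ on the independent family $\{v_i\} \cup \{Tv_i\}$ by, say, $U v_i = v_i$ and $U(Tv_i) = 0$ for all $i$, and extend $U$ arbitrarily (e.g. by $0$) to a complementary subspace so that $U \in \mathrm{End}_F(V)$. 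Then $[T, U]$ sends $v_i \mapsto T U v_i - U T v_i = T v_i - 0 = T v_i$, so $[T,U](V) \supseteq \mathrm{span}\{T v_i : i \in \Lambda\}$, which has dimension $\kappa$ (the $T v_i$ being part of the independent set); hence $[T,U]$ has rank at least $\kappa$ and therefore $[T,U] \notin \F_\kappa(V)$. Thus $T + \F_\kappa(V)$ does not commute with $U + \F_\kappa(V)$, so $T + \F_\kappa(V) \notin Z(\mathrm{End}_F(V)/\F_\kappa(V))$, completing the contrapositive. Together with the observation that $F1 + \F_\kappa(V)$ is obviously central, this shows the quotient is a central algebra, and the proposition follows.

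The main obstacle — and the reason Lemma \ref{infset} was proved first — is precisely the production of the independent set $\mathcal{S} \cup T(\mathcal{S})$ of the correct cardinality $\kappa$; once that is in hand, the construction of the separating operator $U$ and the rank estimate on $[T,U]$ are routine linear algebra. A minor point to be careful about is the choice of a vector-space complement to $\mathrm{span}(\mathcal{S} \cup T(\mathcal{S}))$ so that $U$ is well-defined on all of $V$, but this is harmless (any linear extension works, since we only need a lower bound on the rank of $[T,U]$). I would also explicitly dispose of the finite-dimensional case at the outset, since there the quoted description of ideals does not apply and one instead invokes simplicity directly.
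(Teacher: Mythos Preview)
Your proof is correct and follows essentially the same route as the paper: dispose of the finite-dimensional case by simplicity, use the classification of ideals as $\F_\kappa(V)$, and then apply Lemma \ref{infset} in contrapositive to build an operator $U$ witnessing that $[T,U]\notin \F_\kappa(V)$. The only difference is cosmetic: the paper defines $U$ by $Uv_i=0$, $U(Tv_i)=v_i$ (so that $[T,U]v_i=-v_i$), whereas you take $Uv_i=v_i$, $U(Tv_i)=0$ (so that $[T,U]v_i=Tv_i$); both choices give $\mathrm{rank}([T,U])\ge\kappa$ immediately.
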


\begin{proof}
If $V$ is finite-dimensional,  then $\E_F(V)$ is simple and so there is nothing to prove. Therefore, suppose that $\dim_F V\geq \aleph_0$. Let $I$ be any ideal of $\E_F(V)$. As already mentioned, there is a cardinal number $\aleph_0 \leq \kappa \leq \dim_F V$ such that $I=\F_\kappa(V)$. We have to prove that the  algebra 
$\mathrm{End}_F(V)/\F_\kappa(V)$ is central. This is equivalent to the following:
$$(\forall T \in \mathrm{End}_F(V)) \, ([T,U]\in \mathrm{F}_\kappa(V) \, \forall U \in \mathrm{End}_F(V) \, \Longrightarrow \, T\in \mathrm{F}_\kappa(V)+F1).$$ In order to show this, suppose that $T \notin \mathrm{F}_\kappa(V)+F1$. By Lemma \ref{infset}, there is a subset $\mathcal{S}$ of $V$ of cardinality $\kappa$ such that the set
$\mathcal{S}\cup T(\mathcal{S})$ is linearly independent. Let $U \in \mathrm{End}_F(V)$ be any linear map such that $Ux=0$ and $U(Tx)=x$ for all $x \in \mathcal{S}$ . Then $[T,U]x=-x$ for all $x \in \mathcal{S}$, which shows that $[T,U]\notin \mathrm{F}_\kappa(V)$.
\end{proof}

\begin{remark}
In contrast to Proposition \ref{End}, if $V$ is a (real or complex) infinite-dimensional Banach space, then the algebra $\mathrm{B}(V)$ of all bounded linear operators on $V$ does not need to be centrally stable. This is due to the fact the center of the quotient algebra $\mathrm{B}(V)/\mathrm{K}(V)$, where $\mathrm{K}(V)$ is the ideal of compact operators, can be quite large, even though $\mathrm{B}(V)$ is central. 
In particular, in \cite{MPZ} it was shown that for each countably infinite compact metric space $X$, there is a Banach space $V$ such
that the Banach algebra $\mathrm{B}(V)/\mathrm{K}(V)$ is isomorphic to the algebra $C(X)$ of scalar-valued continuous functions on $X$. 
\end{remark}

Making use of $\F(V) = \F_{\aleph_0}(V)$, the ideal of  $\E_F(V)$ consisting of all finite rank operators,
 we can produce further examples of centrally stable algebras.

\begin{example} Let $V$ be an infinite-dimensional vector space over $F$ and let 
  $B$ be any  central simple subalgebra
of $\E_F(V)$ that contains the identity operator
(one can for example take one of  Weyl algebras). 
We claim that $A= B+ \F(V)$ is a centrally stable algebra. Indeed,  $B\cap \F(V)=0$ since $B$ is simple and unital, and so $A/\F(V)\cong B$. Therefore, $\F(V)$ is a maximal, and hence a unique ideal of $A$. From the assumption that $B$ is central we conclude that $A$ is centrally stable.
\end{example}

\begin{remark} 
As a partial converse of the preceding example, we can show that if a central algebra $A$ is centrally stable, then $A$ has a unique maximal ideal.
Indeed, assume that 
 $A$ contains two different maximal ideals $M$ and $N$. Then $M+N=A$, so by the Chinese Remainder Theorem, 
$$\varphi : A \to (A/M) \times (A/N) \quad\mbox{given by}\quad \varphi(a)=(a+M,a+N)$$
is an algebra epimorphism. Since $A$ is central and centrally stable, all the centers $Z(A)$, $Z(A/M)$ and $Z(A/N)$ are isomorphic to $F$, while the center of $(A/M) \times (A/N)$ is isomorphic to $F \times F$. Therefore $\varphi$ cannot map $Z(A)$ onto $Z((A/M)\oplus (A/N))$.
\end{remark}

If $A$ is a unital complex Banach algebra and 
$M$ is a maximal ideal of $A$, then $M$ is necessarily closed, so $A/M$ is a simple Banach algebra. Therefore its center consists of scalar multiples of $1$ by the Gelfand-Mazur Theorem. In particular, 
$Z(A/M)=(Z(A)+M)/M$. This is not the case for general algebras.

\begin{example} \label{ema}
Let $K$ be a proper field extension of $F$, and let $A$ be the $F$-algebra consisting of matrices of the form  $\left[\begin{smallmatrix} \alpha & \beta  \cr
0 & \lambda \cr
 \end{smallmatrix} \right]
$ with $\lambda \in F$ and $\alpha,\beta\in K$. The set $M$ of all matrices of the form  $\left[\begin{smallmatrix} 0& \beta  \cr
0 & \lambda \cr
 \end{smallmatrix} \right]
$ is  a maximal ideal of $A$ and $A/M\cong K$. Since the center of $A$ consists of scalar multiples of $1$, $Z(A/M)\ne(Z(A)+M)/M$.
\end{example}

We remark that maximal ideals  played a crucial role in Vesterstr\o m's seminal paper \cite{V}.  Example \ref{ema} indicates that they may not be that useful in the purely algebraic context.

In the rest of this section, we will show that the property of being centrally stable is  preserved  under some algebraic constructions.  We start with homomorphic images.

\begin{proposition}\label{homim} A homomorphic image of a  centrally stable algebra $A$ is centrally stable.
\end{proposition}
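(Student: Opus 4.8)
The plan is to exploit transitivity of epimorphisms: an epimorphism out of a homomorphic image of $A$ pulls back to an epimorphism out of $A$ itself. Concretely, suppose $C$ is a homomorphic image of $A$, so that there is an algebra epimorphism $\psi\colon A\to C$, and let $\varphi\colon C\to B$ be an arbitrary epimorphism. First I would observe that $\varphi\circ\psi\colon A\to B$ is again an epimorphism, so that central stability of $A$ gives $(\varphi\circ\psi)(Z(A))=Z(B)$.

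Next I would invoke the elementary fact, valid for any algebra homomorphism, that the image of the center lies in the center of the target: $\psi(Z(A))\subseteq Z(C)$, and hence $\varphi(\psi(Z(A)))\subseteq\varphi(Z(C))\subseteq Z(B)$. Combining this chain of inclusions with the equality $\varphi(\psi(Z(A)))=(\varphi\circ\psi)(Z(A))=Z(B)$ forces $\varphi(Z(C))=Z(B)$. As $\varphi$ was an arbitrary epimorphism, $C$ is centrally stable.

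There is essentially no obstacle here: the argument uses nothing beyond the definition of an epimorphism and the trivial inclusion $\psi(Z(A))\subseteq Z(C)$, so no finiteness or unitality assumptions are needed. One could alternatively phrase the proof through condition (iii) of Proposition \ref{ldef}, but the direct composition argument is cleaner and more transparent.
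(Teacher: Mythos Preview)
Your proof is correct and follows essentially the same composition-of-epimorphisms argument as the paper: where the paper applies central stability of $A$ twice (to both $\psi$ and the composite) to get the equality $\varphi(Z(A))=Z(B)$ in their notation, you apply it once to the composite and then squeeze using the inclusion $\psi(Z(A))\subseteq Z(C)$, which is an equally valid route to the same conclusion.

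One small wording issue: the inclusion $\psi(Z(A))\subseteq Z(C)$ is \emph{not} valid for an arbitrary algebra homomorphism, only for a surjective one (in general one only gets $\psi(Z(A))\subseteq Z(\psi(A))$). Since your $\psi$ and $\varphi$ are both epimorphisms this does not affect the argument, but you should adjust the justification accordingly.
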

\begin{proof}
Let $B$ be a homomorphic image of $A$. Thus, there is an algebra epimorphism $\varphi:A\to B$. Now take an algebra epimorphism $\psi:B\to C$. Since $A$ is centrally stable, it follows that
$$Z(C)= Z\big((\psi\circ\varphi)(A)\big) = (\psi\circ\varphi)(Z(A)) = \psi \big(\varphi(Z(A))\big)= \psi \big(Z(B)\big).$$
This proves that $B$ is centrally stable.  
\end{proof}

Let  $A$ be a  non-unital algebra. Denote by $A^\sharp$ its unitization.
Recall that every element in $A^\sharp$ can be written as $\lambda 1+a$ with $\lambda \in F$ and $a\in A$. 
Note that
$Z(A^\sharp)=F1+Z(A)$ and  
\begin{equation}\label{comidunit}
{\rm Id}([\lambda 1+ a,A^\sharp])={\rm Id}([a,A])\subseteq A.
\end{equation}

\begin{proposition}\label{unit}
A non-unital algebra $A$ is centrally stable if and only if $A^\sharp$
is centrally stable.
\end{proposition}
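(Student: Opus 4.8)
The plan is to characterize central stability via condition~(iii) of Proposition~\ref{ldef}; although that proposition is phrased with unital algebras in mind, its proof (and the notions of center and of $\mathrm{Id}(\cdot)$ occurring in it) make no use of a unit, so it applies verbatim to the non-unital algebra $A$. Hence it suffices to prove that $a\in Z(A)+\mathrm{Id}([a,A])$ for every $a\in A$ if and only if $x\in Z(A^\sharp)+\mathrm{Id}([x,A^\sharp])$ for every $x\in A^\sharp$. The two ingredients I would use are the equalities recorded before the statement, namely $Z(A^\sharp)=F1+Z(A)$ and \eqref{comidunit}, together with the fact that $A^\sharp=F1\oplus A$ as $F$-vector spaces.

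I would first suppose that $A$ is centrally stable and take an arbitrary $x=\lambda 1+a\in A^\sharp$ with $\lambda\in F$ and $a\in A$. By Proposition~\ref{ldef} applied to $A$, one may write $a=z+c$ with $z\in Z(A)$ and $c\in\mathrm{Id}([a,A])$. By \eqref{comidunit} we have $\mathrm{Id}([x,A^\sharp])=\mathrm{Id}([a,A])$, so $c\in\mathrm{Id}([x,A^\sharp])$, while $\lambda 1+z\in F1+Z(A)=Z(A^\sharp)$. Thus $x=(\lambda 1+z)+c\in Z(A^\sharp)+\mathrm{Id}([x,A^\sharp])$, and Proposition~\ref{ldef} gives that $A^\sharp$ is centrally stable.

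Conversely, I would assume $A^\sharp$ is centrally stable and fix $a\in A$. Regarding $a$ as an element of $A^\sharp$, Proposition~\ref{ldef} (together with \eqref{comidunit} in the case $\lambda=0$) yields $a=\mu 1+z+c$ with $\mu\in F$, $z\in Z(A)$, and $c\in\mathrm{Id}([a,A])\subseteq A$. Then $\mu 1=a-z-c\in A$, and since the decomposition $A^\sharp=F1\oplus A$ is direct this forces $\mu=0$; hence $a=z+c\in Z(A)+\mathrm{Id}([a,A])$. By Proposition~\ref{ldef}, $A$ is centrally stable.

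I do not expect a serious obstacle here: the argument is short, and the only points needing a little care are the observation that the characterization of Proposition~\ref{ldef} survives dropping the unit, the coincidence \eqref{comidunit} of the commutator ideals formed in $A$ and in $A^\sharp$, and the use of the direct-sum decomposition $A^\sharp=F1\oplus A$ to annihilate the scalar component in the converse direction.
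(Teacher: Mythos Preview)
Your proof is correct and follows essentially the same approach as the paper's: both directions rely on Proposition~\ref{ldef}(iii) together with \eqref{comidunit} and the identity $Z(A^\sharp)=F1+Z(A)$. The only cosmetic difference is that in the converse direction the paper phrases the vanishing of the scalar part as $z=a-b\in A\cap Z(A^\sharp)=Z(A)$, whereas you unpack this by writing $z=\mu 1+z'$ and using the directness of $A^\sharp=F1\oplus A$ to force $\mu=0$; these are the same argument.
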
 
\begin{proof}
Assume that $A^\sharp$ is centrally stable and let $a \in A$. 
By Proposition \ref{ldef} (iii),  $a=z+b$ for some $z \in Z(A^\sharp)$ and $b \in {\rm Id}([a,A^\sharp])$. Since $b \in A$ by (\ref{comidunit}), it follows that $z=a-b \in A \cap Z(A^\sharp)=Z(A)$.

Conversely, assume that $A$ is centrally stable and take $\lambda 1 + a \in A^\sharp$. Again using Proposition \ref{ldef} (iii), we have $a\in Z(A)+{\rm Id}([a,A])$, so from   \eqref{comidunit} we conclude that $\lambda 1+ a \in  Z(A^\sharp)+{\rm Id}([\lambda 1 + a,A^\sharp])$.
\end{proof}

The direct sum construction also behaves well with respect to central stability.

\begin{proposition}\label{dirsum} The direct sum of a family of algebras  $\{A_j\}_{j \in J}$ is centrally stable if and only if all algebras $A_j$ are centrally stable.
\end{proposition}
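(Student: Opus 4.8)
The plan is to reduce everything to the internal characterization in Proposition~\ref{ldef}(iii). Write $A=\bigoplus_{j\in J}A_j$, regard each $A_k$ as the subalgebra of $A$ supported in the $k$-th coordinate, and write a typical element as $a=(a_j)_{j\in J}$, all but finitely many $a_j$ being $0$; for $y\in A_k$ let $\widetilde y\in A$ denote the element equal to $y$ in coordinate $k$ and $0$ elsewhere.

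For the ``only if'' part I would note that for each $k$ the coordinate projection $\pi_k\colon A\to A_k$ is an algebra epimorphism, so $A_k$ is a homomorphic image of $A$; hence, if $A$ is centrally stable, so is $A_k$ by Proposition~\ref{homim}.

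For the ``if'' part, suppose every $A_j$ is centrally stable. I would first record the two elementary identities
$$Z(A)=\bigoplus_{j\in J}Z(A_j),\qquad {\rm Id}_A\big([a,A]\big)=\bigoplus_{j\in J}{\rm Id}_{A_j}\big([a_j,A_j]\big)\quad(a=(a_j)_j\in A).$$
The first holds because $a$ is central in $A$ precisely when each $a_j$ is central in $A_j$. For the second, $[a,A]=\bigoplus_j[a_j,A_j]$ is contained in the right-hand side, which is an ideal of $A$, giving $\subseteq$; conversely each generator $x[a_k,y]z$, $x[a_k,y]$, $[a_k,y]z$ or $[a_k,y]$ of ${\rm Id}_{A_k}([a_k,A_k])$ (with $x,y,z\in A_k$), viewed inside $A$, equals $\widetilde x\,[a,\widetilde y]\,\widetilde z$, $\widetilde x\,[a,\widetilde y]$, $[a,\widetilde y]\,\widetilde z$ or $[a,\widetilde y]$ respectively, all of which lie in ${\rm Id}_A([a,A])$, giving $\supseteq$. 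Granting these, fix $a=(a_j)_j\in A$; since each $A_j$ is centrally stable, $a_j=z_j+c_j$ with $z_j\in Z(A_j)$ and $c_j\in{\rm Id}_{A_j}([a_j,A_j])$, and we may take $z_j=c_j=0$ when $a_j=0$. Then $z=(z_j)_j\in Z(A)$, $c=(c_j)_j\in{\rm Id}_A([a,A])$ and $a=z+c$, so $A$ satisfies Proposition~\ref{ldef}(iii), i.e.\ $A$ is centrally stable.

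I expect the only genuinely delicate point to be the second displayed identity — more precisely, the inclusion $\supseteq$ — because the $A_j$ need not be unital, so the ideal generated by $[a_k,A_k]$ in $A_k$ is spanned by generators of the four displayed shapes rather than just the ``two-sided'' ones $x[a_k,y]z$, and one must check each shape is reproduced in ${\rm Id}_A([a,A])$. (Alternatively, one can first handle the case of a finite index set, where $A$ is unital as soon as all $A_j$ are and only the generators $x[a_k,y]z$ occur, and then reduce the general case to it; but the direct argument above costs little more.)
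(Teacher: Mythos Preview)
Your proof is correct and follows essentially the same strategy as the paper's: both directions use Proposition~\ref{homim} and Proposition~\ref{ldef}(iii) in the obvious coordinate-wise fashion. You simply spell out in full the step the paper dismisses with ``one easily shows,'' namely the identity ${\rm Id}_A([a,A])=\bigoplus_j{\rm Id}_{A_j}([a_j,A_j])$, including the care needed for the non-unital generators; this is extra detail rather than a different route.
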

 \begin{proof}
If the direct sum of a family of algebras $\{A_j\}_{j \in J}$ is centrally stable, then  each $A_j$ is centrally stable by Proposition \ref{homim}.

Now suppose  that $\{A_j\}_{j \in J}$ is a family of  centrally stable algebras. Set $A:=\bigoplus_{j \in J} A_j$ and fix an element $a=\{a_j\}\in A$. Then there is a finite subset $S \subseteq J$ such that $a_j=0$ for all $j \in J \setminus S$.
Since each $A_{j}$ is centrally stable, Proposition \ref{ldef} (iii) implies that
each $a_j$, $j\in S$, lies in $Z(A_j)+{\rm Id}([a_j,A_j])$. One easily shows that this implies that $a\in {\rm Id}([a,A]) + Z(A)$.
\end{proof}

The problem of central stability of tensor products is more subtle. It is the topic of the next section.

\section{Centrally Stable Elements in Tensor Products of  Algebras} \label{s3}

We start our investigation of central stability in tensor products with a simple  lemma.

\begin{lemma}\label{lik}
Let $A$ be an arbitrary and  $B$ be  a unital algebra, and let $a\in A$. Then $a$ is centrally stable in $A$ if and only if $a\otimes 1$ is centrally stable in $A\otimes B$.   
\end{lemma}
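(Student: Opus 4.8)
The plan is to reduce everything to the characterization in Proposition~\ref{pe}: $a$ is centrally stable in $A$ if and only if $a\in Z(A)+{\rm Id}([a,A])$, and $a\otimes 1$ is centrally stable in $A\otimes B$ if and only if $a\otimes 1\in Z(A\otimes B)+{\rm Id}([a\otimes 1,A\otimes B])$. Thus the task is to compare these two membership conditions, and for that I would first pin down the two ingredients on the $A\otimes B$ side.

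Set $I:={\rm Id}([a,A])$. Since $a\otimes 1$ commutes with $1\otimes B$, one has $[a\otimes 1,x\otimes b]=[a,x]\otimes b$, so $[a\otimes 1,A\otimes B]=[a,A]\otimes B$. Now $I\otimes B$ is an ideal of $A\otimes B$ (here one uses $B=B^2$, which holds since $B$ is unital) and it contains $[a,A]\otimes B$; conversely, each of the spanning elements $u\otimes b$, $(xu)\otimes b=(x\otimes 1)(u\otimes b)$, $(uy)\otimes b=(u\otimes b)(y\otimes 1)$, $(xuy)\otimes b=(x\otimes 1)(u\otimes b)(y\otimes 1)$ of $I\otimes B$ (with $u\in[a,A]$, $x,y\in A$, $b\in B$) lies in the ideal generated by $[a,A]\otimes B$. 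Hence ${\rm Id}([a\otimes 1,A\otimes B])=I\otimes B$. I would also record the standard fact that $Z(A\otimes B)=Z(A)\otimes Z(B)$; the only part of it needed here is the inclusion $Z(A\otimes B)\subseteq Z(A)\otimes B$, which follows by writing a central element as $\sum z_i\otimes b_i$ with the $b_i$ linearly independent over $F$ and observing that commuting with every $x\otimes 1$ forces each $z_i\in Z(A)$.

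Granting this, the ``if'' direction is immediate: if $a=z+u$ with $z\in Z(A)$ and $u\in I$, then $a\otimes 1=z\otimes 1+u\otimes 1\in (Z(A)\otimes Z(B))+(I\otimes B)$, because $1\in Z(B)$. For the ``only if'' direction, write $a\otimes 1=w+v$ with $w\in Z(A\otimes B)\subseteq Z(A)\otimes B$ and $v\in I\otimes B$. Since $B$ is unital, $\{1\}$ extends to an $F$-basis of $B$, so there is an $F$-linear functional $f\colon B\to F$ with $f(1)=1$; then $\mathrm{id}_A\otimes f$, regarded as a linear map $A\otimes_F B\to A$, sends $a\otimes 1$ to $a$, maps $Z(A)\otimes B$ into $Z(A)$, and maps $I\otimes B$ into $I$. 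Applying it to $a\otimes 1=w+v$ yields $a\in Z(A)+I$, i.e.\ $a$ is centrally stable. The argument is essentially bookkeeping; the only steps requiring any thought are the identification ${\rm Id}([a\otimes 1,A\otimes B])={\rm Id}([a,A])\otimes B$ together with $Z(A\otimes B)=Z(A)\otimes Z(B)$, and the mild use of a basis of $B$ to split off the first tensor factor, so I do not anticipate a genuine obstacle.
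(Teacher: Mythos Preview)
Your argument is correct and follows essentially the same route as the paper: both identify ${\rm Id}([a\otimes 1,A\otimes B])={\rm Id}([a,A])\otimes B$ and use $Z(A\otimes B)\subseteq Z(A)\otimes B$ together with Proposition~\ref{pe}. The paper is terser---it simply asserts that $a\otimes 1\in (Z(A)+I)\otimes B$ forces $a\in Z(A)+I$ and omits the converse as ``rather obvious''---whereas you make the projection step explicit via the functional $f$ with $f(1)=1$; this is a clean way to justify what the paper leaves implicit.
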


\begin{proof}
Suppose $a\otimes 1$ is centrally stable in $A\otimes B$.
It is easy to see that $Z(A\otimes B)\subseteq Z(A)\otimes B$ (compare the proof of \cite[Proposition 4.31]{INCA}).
Hence, Proposition \ref{pe} gives
$$a\otimes 1 \in Z(A)\otimes B + {\rm Id}([a\otimes 1, A \otimes B]).$$
Noticing that ${\rm Id}([a\otimes 1, A \otimes B]) = {\rm Id}([a,A])\otimes B$, it follows that $a$ lies in the linear span
of ${\rm Id}([a,A])$ and $Z(A)$. That is, $a$ is centrally stable. 

The converse statement is rather obvious, so we omit the proof. 
\end{proof}

\begin{proposition}\label{l}
Let $A$ and $B$ be unital algebras. 
\begin{itemize}
\item[(a)] If the algebra $A\otimes B$ is centrally stable, then so are  $A$ and $B$. 
\item[(b)] If one of the algebras $A$ or $B$ is centrally stable and the other one is central and simple, then  $A \otimes B$ is centrally stable.
\end{itemize}
\end{proposition}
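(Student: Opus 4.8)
The plan is to prove each direction separately, and in both cases to reduce the statement about algebras to the statement about elements, where Lemmas \ref{lik} and \ref{lik}-style tensor identities do the work.

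For part (a), suppose $A\otimes B$ is centrally stable. To see that $A$ is centrally stable, I would fix $a\in A$ and apply Lemma \ref{lik}: since $B$ is unital, $a$ is centrally stable in $A$ if and only if $a\otimes 1$ is centrally stable in $A\otimes B$, and the latter holds because $A\otimes B$ is centrally stable. Symmetrically, $1\otimes b$ is centrally stable in $A\otimes B$ for every $b\in B$, which by the ``other-sided'' version of Lemma \ref{lik} (interchanging the roles of the two factors, which is legitimate since $A$ is unital) gives that $b$ is centrally stable in $B$. Hence both $A$ and $B$ are centrally stable.

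For part (b), assume without loss of generality that $B$ is central simple and $A$ is centrally stable (the other case is symmetric, using that $\otimes$ is commutative up to isomorphism and that central stability is preserved under isomorphism — a trivial check). The key structural fact is that when $B$ is central simple over $F$, every ideal of $A\otimes B$ has the form $I\otimes B$ for an ideal $I$ of $A$: this follows because $A\otimes B$, viewed as a $B$-module or via the standard argument that the lattice of ideals of $R\otimes_F B$ for $B$ central simple is isomorphic to the lattice of ideals of $R$ (one can cite or reprove this using a basis of $B$ and the simplicity of $B$). Consequently $(A\otimes B)/(I\otimes B)\cong (A/I)\otimes B$, and since $B$ is still central simple, $Z((A/I)\otimes B)=Z(A/I)\otimes 1$. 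Now pick an arbitrary element $t=\sum_i a_i\otimes b_i\in A\otimes B$ with $\{b_i\}$ linearly independent, and suppose its image in some quotient $(A\otimes B)/(I\otimes B)$ is central; unwinding, this forces each $a_i+I$ to be central in $A/I$, hence (as $A$ is centrally stable, via Proposition \ref{pe}(ii)) each $a_i\in Z(A)+I$, so $t\in Z(A)\otimes B + I\otimes B\subseteq Z(A\otimes B)+I\otimes B$. This is precisely condition (ii) of Proposition \ref{pe} applied to every element of $A\otimes B$, hence $A\otimes B$ is centrally stable.

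The main obstacle I anticipate is the lemma that every ideal of $A\otimes B$ with $B$ central simple is of the form $I\otimes B$; the finite-dimensional case of this is classical, but $B$ need not be finite-dimensional here (the Weyl-algebra example in the previous section shows why one wants the general version), so one must either quote a suitably general reference or give the direct argument: take a nonzero ideal $J$, let $I=\{a\in A: a\otimes 1\in J\}$, and show $J=I\otimes B$ by choosing for any $t\in J$ an expression $t=\sum_{i=1}^n a_i\otimes b_i$ with $n$ minimal and $b_i$ linearly independent, then using simplicity of $B$ (pick $x,y\in B$ with $\sum x b_i y = \delta_{i1}$-type relations, or rather use that $B\otimes B^{\mathrm{op}}$ acts, to isolate $a_1\otimes 1\in J$, forcing $a_1\in I$, and induct). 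The rest of the argument is bookkeeping with tensors and the two earlier propositions.
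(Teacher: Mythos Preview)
Your argument for part (a) is correct and identical to the paper's.

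For part (b), your overall architecture---reduce to ideals of the form $I\otimes B$ and compare centers of $(A/I)\otimes B$ and $A\otimes B$---is exactly the paper's approach, and your discussion of why every ideal of $A\otimes B$ has the form $I\otimes B$ is fine (the paper simply cites \cite[Theorem~4.42]{INCA} for this). However, the element-wise computation you carry out at the end contains a genuine error. From the centrality of $\sum_i (a_i+I)\otimes b_i$ you correctly deduce, by commuting with $(x+I)\otimes 1$, that each $a_i+I\in Z(A/I)$, and hence $a_i\in Z(A)+I$. This gives
\[
t\in Z(A)\otimes B + I\otimes B.
\]
But your next assertion, that $Z(A)\otimes B + I\otimes B\subseteq Z(A\otimes B)+I\otimes B$, is false: since $B$ is central, $Z(A\otimes B)=Z(A)\otimes Z(B)=Z(A)\otimes F\cdot 1$, which is strictly smaller than $Z(A)\otimes B$ whenever $B\neq F$. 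In other words, you have used only the ``$A$-half'' of the centrality condition and thrown away the ``$B$-half''.

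The repair is to use the full strength of centrality at once, which is essentially what the paper does. Since $Z\big((A/I)\otimes B\big)=Z(A/I)\otimes F\cdot 1$, the central image of $t$ is of the form $\bar c\otimes 1$ for a single element $\bar c\in Z(A/I)$; central stability of $A$ then gives $\bar c=z+I$ with $z\in Z(A)$, and hence $t-z\otimes 1\in\ker(\pi\otimes\mathrm{id}_B)=I\otimes B$, so $t\in Z(A)\otimes 1+I\otimes B\subseteq Z(A\otimes B)+I\otimes B$. The paper phrases this globally rather than element-wise: with $\varphi=\pi\otimes\mathrm{id}_B$ one simply computes
\[
\varphi\big(Z(A\otimes B)\big)=\varphi\big(Z(A)\otimes Z(B)\big)=\pi(Z(A))\otimes Z(B)=Z(A/I)\otimes Z(B)=Z\big((A/I)\otimes B\big),
\]
which is cleaner and avoids the pitfall altogether.
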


\begin{proof}
Of course, (a) follows from Lemma \ref{lik}. To prove (b), assume that 
 $A$ is centrally stable and that $B$ is a central simple algebra. Let $L$ be an ideal of
$A \otimes B$. By \cite[Theorem 4.42]{INCA} (and the comment following its proof),  there is an ideal $I$ of $A$ such that $L=I \otimes B$. Let $\pi : A \to A/I$ be the canonical epimorphism. Define $\varphi=\pi \otimes \mathrm{id}_B  : A \otimes B \to (A/I) \otimes B$. Then  
$L =\ker\varphi$, so we can identify $(A\otimes B)/L$ with  $(A/I) \otimes B$.
As $A$ is centrally stable, we have $\pi(Z(A))=Z(A/I)$, so 
\begin{align*} 
\varphi\big(Z(A \otimes B)\big)&=\varphi\big(Z(A)\otimes Z(B)\big)=\pi(Z(A))\otimes Z(B)\\&=Z(A/I)\otimes Z(B)
= Z\big((A/I)\otimes B\big).
\end{align*}
This shows that the algebra $A \otimes B$ is centrally stable.
\end{proof}

Note that (b) is a partial converse of (a). We believe that in general the converse is not true. However, our attempts to find a pair of  centrally stable unital algebras $A$ and $B$ such that $A\otimes B$ is not centrally stable
 were unsuccessful so far (Archbold's result \cite[Theorem 3.1]{A} on a related problem for $C ^*$-algebras indicates the delicacy of this problem).

In the simplest case where $A=M_n(F)$, Proposition \ref{l} states the following.

\begin{corollary}\label{cc}
Let $A$ be a unital algebra and let $n$ be a positive integer. Then $A$ is centrally stable if and only if the matrix algebra $M_n(A)$ is centrally stable. In particular, if $A$ is commutative, then $M_n(A)$ is  centrally stable.
\end{corollary}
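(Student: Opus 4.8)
The statement to prove is Corollary \ref{cc}: for a unital algebra $A$ and positive integer $n$, $A$ is centrally stable iff $M_n(A)$ is centrally stable; and if $A$ is commutative then $M_n(A)$ is centrally stable.

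The plan is to derive this immediately from Proposition \ref{l} by identifying $M_n(A)$ with a tensor product. First I would observe that $M_n(A) \cong A \otimes M_n(F)$ as $F$-algebras, via the standard isomorphism sending $a \otimes e_{ij}$ to the matrix with $a$ in position $(i,j)$ and zeros elsewhere (or equivalently $\sum_{ij} a_{ij} \otimes e_{ij} \mapsto (a_{ij})$). This is a routine, well-known fact. Then $B := M_n(F)$ is a central simple algebra: it is simple (two-sided ideals of a full matrix algebra over a field are trivial) and its center is exactly $F1$.

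Given this identification, the forward direction is part (a) of Proposition \ref{l}: if $M_n(A) \cong A \otimes M_n(F)$ is centrally stable, then $A$ is centrally stable. The reverse direction is part (b): if $A$ is centrally stable and $B = M_n(F)$ is central simple, then $A \otimes B \cong M_n(A)$ is centrally stable. So the equivalence follows directly. For the ``in particular'' clause, I would invoke Example \ref{et2}: every commutative algebra is centrally stable, so if $A$ is commutative then $M_n(A)$ is centrally stable by the equivalence just established.

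I do not expect any real obstacle here, since this is genuinely a corollary — the content is entirely contained in Proposition \ref{l} together with the elementary isomorphism $M_n(A) \cong A \otimes_F M_n(F)$ and the elementary fact that $M_n(F)$ is central simple. The only thing requiring any care is making sure the isomorphism $M_n(A) \cong A \otimes M_n(F)$ is stated correctly for a general (possibly noncommutative) $A$, where one uses that scalars from $F$ are central in $A$; this is standard and I would just cite it or state it without proof.

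\begin{proof}
The algebra $M_n(A)$ is isomorphic to $A\otimes M_n(F)$ via the map sending a matrix $(a_{ij})\in M_n(A)$ to $\sum_{i,j} a_{ij}\otimes e_{ij}$. Since $M_n(F)$ is a central simple algebra, the first assertion follows immediately from Proposition \ref{l}: if $M_n(A)\cong A\otimes M_n(F)$ is centrally stable, then $A$ is centrally stable by (a), and if $A$ is centrally stable, then $A\otimes M_n(F)\cong M_n(A)$ is centrally stable by (b). The final assertion now follows from Example \ref{et2}.
\end{proof}
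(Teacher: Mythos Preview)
Your proof is correct and matches the paper's approach exactly: the paper presents this corollary as the special case $B=M_n(F)$ of Proposition~\ref{l}, relying (implicitly) on the standard isomorphism $M_n(A)\cong A\otimes M_n(F)$ and the fact that $M_n(F)$ is central simple. Your invocation of Example~\ref{et2} for the final clause is also precisely what is intended.
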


Remark \ref{r11} below shows that the last statement does not hold without assuming that $A$ is unital.




In our main result in this section we will consider central stability of elements in  the tensor product of an arbitrary unital algebra $A$ and a central simple algebra $B$ (of arbitrary dimension, finite or infinite). Lemma \ref{lik}
tells us that  $a\otimes 1$ is not centrally stable in $A\otimes B$ if $a$ is not centrally stable in $A$. Of course, elements of the form $a\otimes 1$ may not be 
 the only elements in $A\otimes B$ that are not centrally stable. After all, if $a\otimes 1$ is not centrally stable, then neither is $u(a\otimes 1)u^{-1}$ for every invertible $u\in A\otimes B$. We will show, however, that every element in $A\otimes B$ that is not centrally stable is intimately connected with an element of the form $a\otimes 1$.
More specifically, we will prove the following theorem.

\begin{theorem}\label{t} Let $A$ be an arbitrary unital algebra and $B$ be a central simple algebra.
 For every $t\in T= A\otimes B$ there exist an $a\in A$
and an $s\in T$ such that $t= a\otimes 1 + s$ and  $s$ is centrally stable in $T$.
Moreover, if $a$ is centrally stable in $A$, then $t$ is centrally stable in $T$.
\end{theorem}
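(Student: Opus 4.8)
The plan is to exploit the structure of $B$ as a central simple algebra, where the analogue of "diagonalization" is provided by a basis of matrix-unit-like elements together with the Skolem--Noether/Artin--Whaples machinery; but for this particular statement, the cleanest route is to work directly with a vector-space basis of $B$ and the fact that $Z(A\otimes B)=Z(A)\otimes 1$ (since $B$ is central). First I would fix an $F$-basis $\{b_k\}_{k\in K}$ of $B$ with $b_{k_0}=1$ for some distinguished index $k_0$, so that every $t\in T=A\otimes B$ has a unique expression $t=\sum_k a_k\otimes b_k$ with $a_k\in A$ (almost all zero). Setting $a:=a_{k_0}$ and $s:=t-a\otimes 1=\sum_{k\ne k_0} a_k\otimes b_k$, I get the decomposition $t=a\otimes 1+s$ for free. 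So the whole content is showing that this particular $s$ is centrally stable in $T$, and then that $t$ is centrally stable once $a$ is centrally stable in $A$.

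To prove $s$ is centrally stable, I would use Proposition~\ref{pe}(iii): it suffices to show $s\in Z(T)+{\rm Id}([s,T])$, and since $Z(T)=Z(A)\otimes 1$, in fact it is enough to show $s\in{\rm Id}([s,T])$. The idea is that each summand $a_k\otimes b_k$ with $k\ne k_0$, i.e.\ $b_k$ linearly independent from $1$, can be recovered from commutators. Because $B$ is central simple, for any $b\in B$ that is not a scalar there exists $c\in B$ with $[b,c]\ne 0$; more usefully, I would want to choose, for each $k\ne k_0$, elements $c_k,d_k\in B$ so that a suitable combination of $[1\otimes c_k,\,s\,]$ and multiplications by $1\otimes d_k$ isolates $a_k\otimes b_k$ (or at least $a_k\otimes 1$ up to terms already known to lie in the ideal). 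This is the kind of computation carried out for $M_n$ in Examples~\ref{e33} and~\ref{e33t} via the matrix units $e_{ij}[e_{ii},s]$; the general central-simple case should follow the same pattern once one passes, if necessary, to a splitting field, using that ${\rm Id}([s,T])$ is detected after a faithfully flat base change, or alternatively by invoking the Artin--Whaples theorem to produce "matrix units" inside $B$ directly. In any case the conclusion will be that $\sum_{k\ne k_0}a_k\otimes b_k=s\in{\rm Id}([s,T])$.

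For the last assertion, suppose $a$ is centrally stable in $A$. By Lemma~\ref{lik}, $a\otimes 1$ is centrally stable in $T$. Now I want: if $x,y\in T$ are both centrally stable and $x-y\in{\rm Id}([x,T])+{\rm Id}([y,T])$ in a suitably controlled way, then $x$ is centrally stable. More directly: $t=a\otimes 1+s$, and I would show ${\rm Id}([t,T])\supseteq{\rm Id}([s,T])$ modulo terms coming from $a\otimes 1$, so that $t\in Z(T)+{\rm Id}([t,T])$ follows by combining $a\otimes 1\in Z(T)+{\rm Id}([a\otimes 1,T])$ (from Lemma~\ref{lik}) with $s\in{\rm Id}([s,T])$. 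The cleanest formulation is probably: since $[t,T]=[a\otimes 1,T]+[s,T]$ as sets of generators, and $s$ together with $[a\otimes 1,T]$ both lie in ${\rm Id}([t,T])$ after noting $[s,T]\subseteq[t,T]+[a\otimes 1,T]$, one deduces $s\in{\rm Id}([t,T])$ and $a\otimes 1\in Z(T)+{\rm Id}([t,T])$, hence $t=a\otimes 1+s\in Z(T)+{\rm Id}([t,T])$.

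The main obstacle I anticipate is the core claim that $s=\sum_{k\ne k_0}a_k\otimes b_k$ lies in ${\rm Id}([s,T])$ — equivalently, that commutators of $s$ with elements $1\otimes c$ (and left/right multiplications) suffice to reconstruct every piece $a_k\otimes b_k$ that involves a non-scalar basis element $b_k$. For $B=M_n(F)$ this is the explicit matrix-unit identity already used in the examples; for general central simple $B$ the difficulty is that there are no canonical matrix units, so one must either base-change to a splitting field (and check that central stability, or at least membership in the relevant commutator ideal, descends) or invoke Artin--Whaples to manufacture a system of matrix units inside $B$ whose span contains the finitely many $b_k$ occurring in $s$. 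Getting this reduction exactly right — in particular handling the interaction between the chosen basis and the matrix-unit structure, and ensuring the argument is insensitive to whether $\dim_F B$ is finite — is the delicate point; everything else is bookkeeping with Proposition~\ref{pe} and Lemma~\ref{lik}.
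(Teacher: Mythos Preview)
Your overall strategy coincides with the paper's: decompose $t=a\otimes 1+s$ where $a$ is the coefficient of $1$ in a chosen basis, show $s\in{\rm Id}([s,T])$, and then combine with central stability of $a$. For finite-dimensional $B$ your base-change idea is exactly what the paper does (it tensors with $B^{\rm o}$ to reduce to $B=M_n(F)$ and the explicit matrix-unit computation).

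The genuine gap is the infinite-dimensional case. Neither of your proposed routes works there: an infinite-dimensional central simple algebra (for instance the Weyl algebra) need not contain any nontrivial idempotents, so you cannot ``manufacture a system of matrix units inside $B$''; and there is no splitting field turning such a $B$ into a matrix algebra, so the faithfully-flat-descent reduction is unavailable. The paper's argument here is different and uses two ingredients you do not mention. First, for each $i\ge 2$ one needs $c_i\in B$ such that $[b_i,c_i]$ is linearly independent from $\{[b_j,c_i]:j\ne i\}$. If no such $c_i$ existed, then the operators $c\mapsto[b_j,c]$ would be pointwise linearly dependent, and Amitsur's Lemma forces a nontrivial linear combination of these inner derivations to have finite rank; Lemma~\ref{l2} then says a nonzero finite-rank derivation of a simple algebra forces $B$ to be finite-dimensional, a contradiction. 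Only \emph{after} this independence is secured does Artin--Whaples enter: one picks $f=\sum_k L_{u_k}R_{v_k}\in M(B)$ with $f([b_i,c_i])=b_i$ and $f([b_j,c_i])=0$ for $j\ne i$, and applies $\sum_k(1\otimes u_k)(\,\cdot\,)(1\otimes v_k)$ to $[s,1\otimes c_i]$ to isolate $a_i\otimes b_i$ inside the ideal.

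A smaller issue: your argument for the ``moreover'' clause is circular. You want $s\in{\rm Id}([t,T])$, and you try to get it from $[s,T]\subseteq[t,T]+[a\otimes 1,T]$ together with $[a\otimes 1,T]\subseteq{\rm Id}([t,T])$; but the latter inclusion is what you would deduce \emph{from} $s\in{\rm Id}([t,T])$, not an independent input. The paper sidesteps this entirely with the one-line observation that $[t,1\otimes c]=[s,1\otimes c]$ for every $c\in B$ (since $a\otimes 1$ commutes with $1\otimes B$). Hence the isolating commutators already lie in ${\rm Id}([t,T])$, giving $s\in{\rm Id}([s,T])\cap{\rm Id}([t,T])$ simultaneously; then $a\otimes 1=t-s$ has all its commutators in ${\rm Id}([t,T])$, and central stability of $a$ finishes the proof.
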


In the proof of Theorem \ref{t}, we will make use of the {\em multiplication algebra} of a central simple algebra $B$; recall that this is the subalgebra of the algebra of all linear operators from $B$ to $B$ generated by all left and right multiplication maps $L_b,R_b:B\to B$, $L_b(x)=bx$, $R_b(x)=xb$. We denote it by $M(B)$. The  Artin-Whaples Theorem
states that $M(B)$ is a dense algebra of linear operators of $B$ (see, e.g.,  \cite[Corollary 5.24]{INCA}). This means that, for any linearly independent
$b_1, \dots, b_m\in B$ and arbitrary $c_1,\dots,c_m\in B$, there exists an 
$f\in M(B)$ such that $f(b_i)=c_i$ for all $i=1,\dots,m$.

Another ingredient in the proof is a version of Amitsur's Lemma (see \cite[Theorem 4.2.7]{BMM}) which states
that if $T_1,\dots, T_n$ are linear operators between vector spaces $U$ and $V$ such that the vectors 
$T_1x,\dots,T_nx$ are linearly dependent for every $x\in U$, then a nontrivial linear combination of $T_1, \dots,T_n$ has finite rank (in fact, its rank is at most $n-1$, provided that $F$ is infinite \cite[Theorem 2.2]{BS2}).

Finally, we will need the following lemma, which is a special case of the results from  \cite{B} and \cite{BE}. The proof is short and simple, so we give it here.

\begin{lemma}\label{l2}
Let $d$ be a nonzero derivation of a simple algebra $B$. If $d$ has  finite rank, then $B$ is finite-dimensional.
\end{lemma}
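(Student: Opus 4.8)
The plan is to prove Lemma~\ref{l2} by contradiction: assume $d$ is a nonzero derivation of the simple algebra $B$, that $d$ has finite rank, but that $B$ is infinite-dimensional, and derive an impossibility. The guiding idea is that a nonzero derivation of a simple algebra cannot be ``too small'': its image must generate the whole algebra as an ideal (since $B$ is simple), and one expects the finite-rank hypothesis to clash with this.

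\medskip

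First I would recall the Leibniz rule $d(xy)=d(x)y+xd(y)$ and use it to get some control on where $d$ vanishes. Let $K=\ker d$, a subalgebra of $B$, and let $R=d(B)$, a finite-dimensional subspace. From Leibniz, for $x\in K$ and arbitrary $y\in B$ we have $d(xy)=x\,d(y)$ and $d(yx)=d(y)\,x$, so $R$ is a $K$-subbimodule of $B$; in particular $KR+RK\subseteq R$ is finite-dimensional. Since $d\neq 0$, $R\neq 0$, so pick $0\neq r\in R$. Then $Kr\subseteq R$ and $rK\subseteq R$ are finite-dimensional, which forces the left and right annihilator-modulo considerations: the map $k\mapsto kr$ from $K$ into the finite-dimensional space $R$ has a large kernel, i.e.\ the left annihilator $\ell_K(r)=\{k\in K: kr=0\}$ has finite codimension in $K$, and similarly on the right.

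\medskip

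Next I would exploit simplicity. The ideal of $B$ generated by $r$ is all of $B$, so $B=\sum_i u_i r v_i$ for finitely many $u_i,v_i\in B$ (using that $B$ need not be unital, one writes elements of the ideal generated by $r$ as finite sums of terms $u r v$, $ur$, $rv$, $r$). Apply $d$: each summand $d(u_irv_i)=d(u_i)rv_i+u_id(r)v_i+u_ir\,d(v_i)$ lies in $R\,B\,B + B\,R\,B + B\,B\,R$ — but that is not obviously finite-dimensional, so this crude approach needs sharpening. The cleaner route is to work inside $K$: since $K$ has finite codimension in $B$ (because $d$ has finite rank, $\dim B/K=\dim R<\infty$), and $\ell_K(r)$ has finite codimension in $K$, the two-sided annihilator type subalgebra $K_0:=\{k\in K: kr=rk=0\}$ still has finite codimension in $B$. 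Now for $k,k'\in K_0$ one computes $d(k r k')=k\,d(r)\,k' $ but also, expanding differently via $kr=0$, one should get relations forcing $d(r)$ to interact trivially with a finite-codimensional subalgebra. The key step I would push on is: show that a finite-codimensional subalgebra of an infinite-dimensional simple algebra must generate $B$ as an ideal in a ``bounded'' way, contradicting that $r$ (and hence $d(B)$) is annihilated on a finite-codimensional set while still generating $B$.

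\medskip

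The main obstacle, and where I expect the real work to be, is handling the non-unital case and making the codimension bookkeeping rigorous — specifically, turning ``$d(B)$ finite-dimensional'' plus ``$B$ simple'' into a contradiction without accidentally assuming $B$ has an identity or is finite-dimensional to begin with. A slick alternative that avoids much of this: regard $d$ as a finite-rank operator and use a dimension/linear-algebra argument in the style of Amitsur's Lemma cited just above — since $\operatorname{Id}(d(B))=B$ by simplicity, one can write the identity operator (or enough multiplication operators to generate $\E$ densely) in terms of $L_b d R_c$-type compositions, and finite rank of $d$ would make the multiplication algebra $M(B)$ consist of finite-rank operators, forcing $B$ finite-dimensional by the Artin--Whaples density theorem. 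I would first attempt this operator-theoretic argument, as it meshes directly with the tools (Artin--Whaples, finite-rank) already introduced in the surrounding text, and fall back to the bimodule/annihilator argument only if the non-unital subtleties obstruct it.
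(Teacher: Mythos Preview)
Your proposal is a plan rather than a proof, and you abandon your best line just before it finishes. You correctly note that $K=\ker d$ has finite codimension in $B$, that $R=d(B)$ is a finite-dimensional $K$-bimodule, and hence that for $r=d(a)\neq 0$ the set $K_0=\{k\in K:kr=rk=0\}$ has finite codimension in $B$. At that point you are done: writing $B=K_0+W$ with $\dim W<\infty$, one has $BrB=WrW$ (since $K_0r=rK_0=0$), so the ideal generated by $r$ is finite-dimensional and hence equals $B$. There is no need for the detour through $d(krk')$, no contradiction argument, and no non-unital subtlety to worry about. The paper's proof is the same idea with the annihilator bookkeeping stripped away: from $x\,d(a)=d(xa)-d(x)a$ one reads off directly that $B\,d(a)\subseteq d(B)+d(B)a$ is \emph{itself} finite-dimensional (not merely that its left annihilator is large), and symmetrically $d(a)\,B\subseteq d(B)+a\,d(B)$; then for $b$ with $d(a)b\,d(a)\neq 0$ the ideal generated by $d(a)b\,d(a)$ lies in the finite-dimensional space $(d(B)+d(B)a)\,b\,(d(B)+a\,d(B))$, and simplicity finishes it in four lines.

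Your fallback ``operator-theoretic'' route via Artin--Whaples should be discarded. The lemma concerns an arbitrary simple algebra, not a central simple one, so the density theorem is not available as stated; and the assertion that finite rank of $d$ would force $M(B)$ to consist of finite-rank operators is neither justified nor true --- you offer no mechanism for expressing a general $L_x$ or $R_y$ as a combination of compositions $L_b\,d\,R_c$, and in any infinite-dimensional simple algebra some $L_b$ must already have infinite rank (otherwise every $BbB$, and hence every nonzero ideal, would be finite-dimensional).
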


\begin{proof}
Take $a\in B$ such that $d(a)\ne 0$. Since $B$ is simple, $d(a)bd(a)\ne 0$ for some  $b\in B$.
For any $x,y\in B$,
$$xd(a)bd(a)y= \big(d(xa)-d(x)a\big)b \big(d(ay)-ad(y)\big).$$ 
Thus,
$$xd(a)bd(a)y\in (d(B)+d(B)a)b(d(B) + ad(B)).$$
Since $d(B)$ is a finite-dimensional space, it follows that the ideal generated by $d(a)bd(a)$ is finite-dimensional. As $B$ is simple, this gives the desired conclusion.
\end{proof}


\begin{proof}[Proof of Theorem \ref{t}]
We will establish slightly more than stated in the theorem. That is, we will show that  there exist an $a\in A$
and an $s\in T$ such that
\begin{enumerate}
\item[{\rm (a)}] $t= a\otimes 1 + s$.
\item[{\rm (b)}]  $s\in {\rm Id}([s,T])\bigcap  {\rm Id}([t,T])$.
\end{enumerate}
By Proposition \ref{pe}, $s\in {\rm Id}([s,T])$ implies that $s$ is centrally stable in $T$. We will need $s\in {\rm Id}([t,T])$ to prove the last statement of the theorem.

We claim that if $B$ is finite-dimensional, then, in order to find an $a\in A$ and an $s\in T$ satisfying (a) and (b),
 there is no loss of generality in assuming that $B=M_n(F)$. Indeed, let
$B^{{\rm o}}$ be the opposite algebra of $B$. Then $B\otimes B^{{\rm o}}\cong M_n(F)$ where $n$ is the dimension of $B$ \cite[Corollary 4.28]{INCA}, and hence
 $T\otimes B^{{\rm o}}\cong A\otimes M_n(F)$. Assume now that  an $a\in A$ and an $s\in T$ satisfying (a) and (b) exist if $B=M_n(F)$. Take $t\in T$. Then $t\otimes 1 \in T\otimes B^{{\rm o}}$ and so there exist an $a\in A$ and an 
$s'\in T\otimes B^{{\rm o}}$ such that $$t\otimes 1 = a\otimes 1\otimes 1 +s'$$
and 
$$s'\in {\rm Id}([s',T\otimes  B^{{\rm o}}])\bigcap{\rm Id}([t\otimes 1,T\otimes  B^{{\rm o}}]).$$
Set $s= t - a\otimes 1$, so that $s' = s\otimes 1$. Since
 $${\rm Id}([s\otimes 1,T\otimes  B^{{\rm o}}]) = {\rm Id}([s,T])\otimes  B^{{\rm o}}\,\,\,\,\mbox{and}\,\,\,\,
{\rm Id}([t\otimes 1,T\otimes  B^{{\rm o}}]) = {\rm Id}([t,T])\otimes  B^{{\rm o}},$$
it follows that 
$$s\otimes 1 \in {\rm Id}([s,T])\otimes  B^{{\rm o}}\bigcap {\rm Id}([t,T])\otimes  B^{{\rm o}}.$$
But then $s\in {\rm Id}([s,T])\bigcap {\rm Id}([t,T])$, which proves our claim.

Let us now prove that elements 
$a$ and $s$ satisfying (a) and (b) exist for an arbitrary
$$t=\sum_{i=1} ^m a_i\otimes b_i\in T.$$ 
 Without loss of generality, we may assume that $b_1,\dots,b_m$ are linearly independent and $b_1=1$.
Moreover, if  $B$ is finite-dimensional, then we may assume that $B=M_n(F)$ and hence that $m=n^2$ and $b_2,\dots,b_m$ are the standard matrix units $e_{pq}$ with $1\le p,q\le n$ and $(p,q)\ne (n,n)$. 

We claim that for each $i >1$ there exists a $c_i\in B$ such that $[b_i,c_i]$ does not lie in the linear span of $[b_j,c_i]$ with $j\ne i$. If  $B=M_n(F)$ and $b_i = e_{pq}$, then we    take $c_i=e_{qn}$. We may therefore assume that $B$ is 
 infinite-dimensional. Suppose our claim is not true, that is, for every $c\in B$, $[b_i,c]$  lies in the linear span of $[b_j,c]$ with $j\ne i$.  We may then invoke Amitsur's Lemma which tells us that there exists a nontrivial linear combination of the inner derivations $c\mapsto [b_j,c]$, $j=2,\dots,m$, which has finite rank. However, Lemma \ref{l2} implies that it must be equal to zero. This means that 
a nontrivial linear combination of $b_2,\dots,b_m$ lies in the center of $B$, which, by our assumption, 
consists of scalar multiples of $b_1=1$. As this contradicts the assumption that $b_1,b_2,\dots,b_m$ are linearly independent, our claim is proved.

Set $$a=a_1,\,\,\,\, s=t-a\otimes 1,\,\,\,\, I = {\rm Id}([s,T]), \,\,\,\,\mbox{and}\,\,\,\,J = {\rm Id}([t,T]).$$
Our goal is to prove that $s\in I\cap J$.  Fix $i\ge 2$ and let $c_i$ be the element from the preceding paragraph.
We have
$[t,1\otimes c_i] = [s,1\otimes c_i] \in I\cap J,$
that is,
$$ a_2\otimes [b_2,c_i] + \cdots +  a_i\otimes [b_i,c_i] + \cdots + a_m \otimes [b_m,c_i]\in I\cap J.$$  
By the Artin-Whaples Theorem, there exists an $f= \sum_k L_{u_k}R_{v_k}\in M(B)$ such that 
$f([b_i,c_i]) =b_i$ and $f([b_j,c_i]) =0$ for $j\ne i$. Hence,
$$ I\cap J \ni \sum_k (1\otimes u_k) \Big(\sum_{j=2}^m a_j\otimes [b_j,c_i]\Big) (1\otimes v_k)=a_i\otimes b_i.$$
Consequently, $$s=\sum_{i=2}^m a_i\otimes b_i\in I\cap J,$$ as desired.


Finally, assume that $a$ is centrally stable in $A$. By Proposition \ref{pe}, there exist an  $\alpha\in Z(A)$ and $x_i,y_i,z_i\in A$  such that
$$a = \alpha+\sum_i y_i[a,x_i]z_i,$$
and hence
$$a\otimes 1 =  \alpha\otimes 1+\sum (y_i\otimes 1)[a\otimes 1,x_i\otimes 1](z_i\otimes 1).$$
As $a\otimes 1 = t -s$ and  $s$ lies in $J$, the commutator of 
$a\otimes 1$ with any element in $T$ lies in $J$. Therefore, $a\otimes 1 -\alpha\otimes 1\in J$, and so
$$t = \alpha\otimes 1 + (a\otimes 1  -\alpha\otimes 1)+ s  \in  Z(A)\otimes 1 + J,$$
which proves that $t$ is centrally stable.
\end{proof}

\begin{remark}\label{rr}
Even if $A$ is not centrally stable, many (if not most) elements in $M_n(A)$ are still centrally stable. For example,
from the proof of Theorem \ref{t} it is evident that if $t=(a_{ij})\in M_n(A)$ is such that 
at least one of $a_{ii}$ is centrally  stable in $A$, then $t$ is centrally stable in $M_n(A)$ (the proof actually considers the case where $a_{nn}$ is centrally stable). On the other hand,
Lemma \ref{lik} shows that if
 $a=a_{11}=\dots=a_{nn}$ is not centrally stable in $A$ and $a_{ij}=0$ for $i\ne j$, then 
$t$ is not centrally stable in $M_n(A)$.
\end{remark}

From the discussion in  Remark \ref{rr} it   is evident that the sum of centrally stable elements is not always centrally stable.

\section{Centrally Stable Finite-Dimensional Algebras} \label{s4}

Let $A$ be a finite-dimensional algebra over a field $F$. By $\rad(A)$ we denote the radical of $A$; that is, $\rad(A)$ is a unique maximal nilpotent ideal of $A$. If $A$ is semisimple
 (i.e., $\rad(A)=0$), then Wedderburn's Theorem tells us that $A$ is isomorphic to a finite direct product of simple algebras of the form $M_{n_i}(D_i)$ where $n_i\ge 1$ and $D_i$ is a division algebra over $F$ (see, e.g., \cite[Theorem 2.64]{INCA}). Combining this with Example  \ref{et1} and Proposition \ref{dirsum}, we see that 
every finite-dimensional semisimple algebra is centrally stable. Under a mild assumption that $F$ is perfect, we will be able to determine  the structure  
of an arbitrary  centrally stable finite-dimensional unital algebra over $F$ (Theorem \ref{tmain}). Before proceeding to the proof of this result, we record a few  simple results
that concern algebras that are not necessarily unital.

\begin{proposition}
A nonzero finite-dimensional  algebra $A$ with zero center is not centrally stable.  
\end{proposition}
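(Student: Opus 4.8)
The plan is to exhibit a single epimorphism from $A$ that already witnesses the failure of central stability. Since $Z(A) = 0$, condition (iii) of Proposition \ref{ldef} would force $A = \mathrm{Id}([a,A])$ for every $a \in A$; equivalently, $Z(A/I) = I/I = 0$ for every proper ideal $I$ of $A$ if $A$ were centrally stable (using Proposition \ref{ldef}(ii)). So it suffices to produce one proper ideal $I$ of $A$ with $A/I$ having nonzero center. The natural candidate is a maximal ideal: if $M$ is a maximal ideal of $A$, then $A/M$ is simple, and either $A/M$ is a simple algebra with identity (so its center is a nonzero field, contradiction) or—if $A$ has no identity—$A/M$ could conceivably be a zero-multiplication simple algebra, i.e.\ one-dimensional with trivial product, whose center is the whole (nonzero) space.

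The one subtlety is that a nonunital finite-dimensional algebra need not have a maximal ideal that is proper in a useful sense, or $A$ might equal $A^2$ or not. First I would dispose of the case $A^2 \neq A$: if $A^2 \neq A$, then $A^2$ is a proper ideal, $A/A^2$ is a nonzero algebra with zero multiplication, hence $Z(A/A^2) = A/A^2 \neq 0 = (Z(A) + A^2)/A^2$, so $A$ is not centrally stable by Proposition \ref{ldef}(ii). (Here I also note $A/A^2$ is nonzero precisely because $A^2 \subsetneq A$.) So assume $A^2 = A$, i.e.\ $A$ is idempotent. In the idempotent finite-dimensional case, $A$ has a maximal (modular or ordinary) ideal $M$ with $A/M$ simple and $ (A/M)^2 = A/M \neq 0$; such a simple algebra is then a simple ring with nonzero square, which by the classical structure theory is $M_n(D)$ for a division algebra $D$, hence unital with center $Z(M_n(D)) \cong Z(D) \neq 0$. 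Then $Z(A/M) \neq 0$ while $(Z(A)+M)/M = M/M = 0$, so again $A$ is not centrally stable.

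Alternatively, and perhaps more cleanly, I would pass to the unitization: by Proposition \ref{unit}, $A$ is centrally stable iff $A^\sharp$ is. But $A^\sharp$ is unital with $Z(A^\sharp) = F1 + Z(A) = F1$ (using $Z(A) = 0$), so $A^\sharp$ is a central algebra. If $A^\sharp$ were centrally stable, the Remark following the Weyl-algebra example shows $A^\sharp$ would have a unique maximal ideal; but $A$ is itself a maximal ideal of $A^\sharp$ (with $A^\sharp/A \cong F$), and since $A$ is finite-dimensional and nonzero with $A^2 = A$ failing or not, one still finds a second maximal ideal—for instance any maximal ideal of $A$ (as a ring) extended, or more directly $A^\sharp$ contains $A$ as well as, via the Wedderburn decomposition of $A^\sharp$ or via semisimplicity of $A^\sharp/\mathrm{rad}(A^\sharp)$, at least one further maximal ideal whenever $\dim A \geq 1$. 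The hard part will be verifying cleanly that $A^\sharp$ genuinely has two distinct maximal ideals in all cases; this amounts to showing that a nonzero finite-dimensional algebra $A$, unitized, cannot be local with $A$ as its only maximal ideal—which fails exactly when $A = \mathrm{rad}(A^\sharp)$, i.e.\ $A$ is nilpotent, and then $A^2 \subsetneq A$ puts us back in the easy first case.

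So the overall structure I would write is: (1) reduce via Proposition \ref{unit} to $A^\sharp$, noting $Z(A^\sharp) = F1$; (2) if $A$ is nilpotent, then $A^2 \subsetneq A$ and $Z(A/A^2) = A/A^2 \neq 0$ gives the contradiction directly in $A$; (3) if $A$ is not nilpotent, then $A^\sharp/\mathrm{rad}(A^\sharp)$ is a nontrivial semisimple algebra properly containing the image of $F1$, hence has at least two maximal ideals, pulling back to two distinct maximal ideals of $A^\sharp$, contradicting the Remark's necessary condition for a central centrally stable algebra. The only genuine content is the routine structure-theoretic bookkeeping in step (3); everything else is immediate from the cited results.
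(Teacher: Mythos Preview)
Your argument is correct in substance but considerably more elaborate than the paper's. The paper's proof is three sentences: since $A^n = 0$ would force $A^{n-1} \subseteq Z(A) = 0$ and hence $A = 0$, the algebra $A$ cannot be nilpotent; therefore $\rad(A) \subsetneq A$, so $A/\rad(A)$ is a nonzero semisimple algebra and in particular has nonzero center, contradicting central stability via Proposition \ref{ldef}(ii). No unitization, no maximal-ideal counting, no case split.

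The observation you miss is that your nilpotent case is vacuous: a nonzero algebra with zero center is never nilpotent, by the one-line argument above. Once that is seen, your first approach already collapses to the paper's---just quotient by the radical. Your detour through $A^\sharp$ and the unique-maximal-ideal Remark also works, but replaces a direct argument with structural bookkeeping. One small imprecision worth fixing in your step (3): the fact that $A^\sharp/\rad(A^\sharp)$ ``properly contains the image of $F1$'' does not by itself yield two maximal ideals (consider $M_2(F)$). The actual reason is that $A$ is already a maximal ideal of $A^\sharp$ with quotient $F$, so one Wedderburn factor of $A^\sharp/\rad(A^\sharp)$ is $F$; non-nilpotency of $A$ then forces the semisimple quotient to be strictly larger than $F$, hence to have a second simple factor and thus a second maximal ideal.
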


\begin{proof} As 
 $A^n=0$ obviously implies $A^{n-1}\subseteq Z(A)$, 
 $A$ is not a nilpotent algebra.
Thus, $\rad(A)$ is a proper ideal of $A$ and hence $A/\rad(A)$ is a nonzero semisimple algebra. In particular, $A/\rad(A)$ has a nonzero center, and so
  $A$ cannot be centrally stable. 
\end{proof}

\begin{proposition}\label{prad} Suppose an algebra $A$ contains a nilpotent ideal $N$ such that $A=Z(A)+N$. Then the only centrally stable elements in $A$ are the central elements.  
\end{proposition}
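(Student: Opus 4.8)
The plan is to use the characterization of centrally stable elements from Proposition \ref{pe}(iii): an element $a\in A$ is centrally stable if and only if $a\in Z(A)+{\rm Id}([a,A])$. Since elements of the center are always centrally stable, it suffices to prove the reverse containment, i.e.\ that every centrally stable $a\in A$ lies in $Z(A)$.

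First I would reduce to the case $a\in N$. Using the hypothesis $A=Z(A)+N$, write $a=z+n$ with $z\in Z(A)$ and $n\in N$. Because $z$ is central, $[a,A]=[n,A]$, hence ${\rm Id}([a,A])={\rm Id}([n,A])=:I$, and the condition that $a$ is centrally stable becomes $n\in Z(A)+I$. Thus it is enough to show $n\in Z(A)$. Fix, once and for all, a decomposition $n=w+x$ with $w\in Z(A)$ and $x\in I$; note that again $[x,A]=[n,A]$, since $w$ is central.

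The heart of the argument is an induction on $m$ showing $I\subseteq N^m$ for every $m\ge 1$. For $m=1$ this holds because $n\in N$ and $N$ is an ideal, so $[n,A]\subseteq N$ and therefore $I={\rm Id}([n,A])\subseteq N$. For the inductive step, assume $I\subseteq N^m$. Then $x\in N^m$, and since $A=Z(A)+N$ and $x$ commutes with $Z(A)$, we get $[n,A]=[x,A]=[x,N]\subseteq N^mN+NN^m\subseteq N^{m+1}$; as $N^{m+1}$ is again an ideal of $A$, it follows that $I={\rm Id}([n,A])\subseteq N^{m+1}$. Because $N$ is nilpotent, $N^k=0$ for some $k$, so $I=0$, whence $[n,A]=0$, i.e.\ $n\in Z(A)$, and therefore $a=z+n\in Z(A)$, as desired. (In particular, this recovers the ad hoc computation made for the algebra $S$ in Example \ref{e33}.)

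I expect the only real point requiring care is the bookkeeping that the ideal $I={\rm Id}([n,A])$ is unchanged when $n$ is replaced by $n-w$ for central $w$ — this is precisely what allows a single fixed decomposition $n=w+x$ to drive the entire induction — together with the elementary facts that powers of an ideal are ideals and that commutators with central elements vanish, which are exactly the places where the hypotheses ``$N$ is a nilpotent ideal'' and ``$A=Z(A)+N$'' are used.
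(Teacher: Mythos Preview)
Your proof is correct and follows essentially the same approach as the paper's: both arguments exploit the fact that $A=Z(A)+N$ forces the commutator ideal of a centrally stable element to descend into successive powers of $N$, hence to vanish. The only cosmetic difference is that the paper organizes this as a minimal-counterexample (descent) argument---choosing the largest $j$ with a non-central centrally stable element in $N^j$ and producing one in $N^{j+1}$---whereas you run a direct induction showing $I\subseteq N^m$ for all $m$; the underlying computation $[x,N]\subseteq N^{m+1}$ for $x\in N^m$ is identical.
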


\begin{proof}
Suppose, on the contrary, that $A$ contains a non-central centrally stable element. Since $Z(A)+N=A$, then so does $N$. As $N$ is nilpotent, there exists a  positive integer $j$ such that
$N^j$ contains a non-central centrally stable element, but $N^{j+1}$ does not. Let $r\in N^j$ be such an element. By Proposition \ref{pe},
 $$r\in Z(A)+{\rm Id}([r,A]) = Z(A)+{\rm Id}([r,N]),$$
so  ${\rm Id}([r,N])$ also contains a  non-central centrally stable element. 
However,  ${\rm Id}([r,N])\subseteq N^{j+1}$, so this is a contradiction.
\end{proof}

For an illustration of Proposition \ref{prad}, see Example \ref{e33}.

\begin{corollary}\label{rad}
Let $A$ be a finite-dimensional algebra. If $A/\rad(A)$ is commutative and $A$ is not commutative, then $A$ is not centrally stable. 
\end{corollary}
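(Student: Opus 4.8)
The statement to prove is Corollary~\ref{rad}: if $A$ is finite-dimensional, $A/\rad(A)$ is commutative, and $A$ is not commutative, then $A$ is not centrally stable.

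The plan is to derive this as a direct consequence of Proposition~\ref{prad}. Recall Proposition~\ref{prad} says that if an algebra contains a nilpotent ideal $N$ with $A = Z(A) + N$, then the only centrally stable elements are the central ones; in particular, if $A$ is noncommutative such an $A$ fails to be centrally stable, since it then has a non-central element which cannot be centrally stable. So the whole task reduces to exhibiting a nilpotent ideal $N$ with $A = Z(A) + N$. The natural candidate is $N = \rad(A)$, which is indeed a nilpotent ideal of the finite-dimensional algebra $A$.

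The key step is therefore to show $A = Z(A) + \rad(A)$ under the hypothesis that $A/\rad(A)$ is commutative. This is where the commutativity of the quotient comes in: for any $a, b \in A$ the commutator $[a,b]$ maps to $0$ in $A/\rad(A)$, hence $[A,A] \subseteq \rad(A)$. But I need more than this — I need every element of $A$ to be congruent modulo $\rad(A)$ to a \emph{central} element of $A$, not just that commutators land in the radical. First I would reduce to the unital case: if $A$ is not already unital, pass to the unitization $A^\sharp$; by Proposition~\ref{unit}, $A$ is centrally stable iff $A^\sharp$ is, and $A^\sharp/\rad(A^\sharp)$ is still commutative (since $\rad(A^\sharp) = \rad(A)$ and $A^\sharp/\rad(A) \cong (A/\rad(A))^\sharp$ or $A/\rad(A)$, which is commutative), while $A^\sharp$ is noncommutative iff $A$ is. So assume $A$ is unital. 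Since $F$ — wait, the corollary does not assume $F$ perfect, so I should not invoke Wedderburn--Malcev. Instead, I argue directly: given $a \in A$, I want $z \in Z(A)$ with $a - z \in \rad(A)$. Consider the subalgebra $F[a]$ generated by $a$ together with $1$; it is commutative. Actually the cleanest route: since $A/\rad(A)$ is commutative, $[a, x] \in \rad(A)$ for all $x$, so $\mathrm{ad}_a$ is a nilpotent derivation; but this still does not immediately give that $a$ differs from a central element by a radical element.

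Let me instead use the following cleaner argument for $A = Z(A) + \rad(A)$. The hard part, as flagged, is precisely this identity, and I think the right tool is the lifting of idempotents together with the structure of $A/\rad(A)$ as a commutative semisimple — hence, when $F$ is not necessarily perfect, merely commutative — algebra. Actually, a slicker path avoids this entirely: note that Corollary~\ref{rad} will be applied in the context where the main theorem (needing $F$ perfect) is being developed, but as a standalone corollary of Proposition~\ref{prad} it suffices to observe that the \emph{commutator ideal} $\mathrm{Id}([A,A])$ is contained in $\rad(A)$, hence is nilpotent, and by Proposition~\ref{ldef}(iii) (the remark after it) a necessary condition for central stability is $A = Z(A) + \mathrm{Id}([A,A])$. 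Taking $N = \mathrm{Id}([A,A])$, which is a nilpotent ideal contained in $\rad(A)$, Proposition~\ref{prad} applies provided $A = Z(A) + N$; but if $A$ were centrally stable then indeed $A = Z(A) + N$ by the necessary condition, and then Proposition~\ref{prad} forces every element to be central, i.e.\ $A$ commutative, contrary to hypothesis. So I would phrase the proof as: suppose $A$ is centrally stable; since $A/\rad(A)$ is commutative, $N := \mathrm{Id}([A,A]) \subseteq \rad(A)$ is nilpotent; by the necessary condition following Proposition~\ref{ldef}, $A = Z(A) + N$; Proposition~\ref{prad} then gives that all centrally stable elements of $A$ are central, but every element of $A$ is centrally stable, so $A = Z(A)$, i.e.\ $A$ is commutative — a contradiction. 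This dispenses with the unitization reduction and the need for $F$ perfect, and the only genuine content beyond citing earlier results is the trivial observation $[A,A] \subseteq \rad(A)$.
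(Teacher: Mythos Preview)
Your final argument is correct and rests on the same two ingredients as the paper's proof---Proposition~\ref{prad} and the observation $[A,A]\subseteq\rad(A)$---but organizes them a bit differently. The paper splits into cases: if $A=Z(A)+\rad(A)$ it applies Proposition~\ref{prad} with $N=\rad(A)$; otherwise any $a\notin Z(A)+\rad(A)$ fails condition~(iii) of Proposition~\ref{pe} since $\mathrm{Id}([a,A])\subseteq\rad(A)$, so such an $a$ is not centrally stable. Your version avoids the case split by taking $N=\mathrm{Id}([A,A])$ and arguing by contradiction from the necessary condition $A=Z(A)+\mathrm{Id}([A,A])$ noted after Proposition~\ref{ldef}; this is marginally slicker. (Your earlier instinct to try proving $A=Z(A)+\rad(A)$ directly would indeed have failed in general---Example~\ref{exh} gives a noncommutative $A$ with $A/\rad(A)$ commutative but $Z(A)+\rad(A)\ne A$---so the pivot was the right move.)
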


\begin{proof}
By Proposition \ref{prad}, we may assume that $A\ne Z(A)+ \rad(A)$. If $a\in A$ is centrally stable, 
then, by Proposition \ref{pe}, 
$$a\in Z(A) +{\rm Id}([a,A]) \subseteq Z(A)+\rad(A).$$
Thus, only the elements in $Z(A)+\rad(A)$ may be centrally stable.
\end{proof}

Let us also record the following immediate corollary to Proposition \ref{prad}.
\begin{corollary}\label{nilp} A nilpotent algebra is centrally stable if and only if it is commutative.
\end{corollary}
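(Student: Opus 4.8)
The plan is to obtain this corollary as an immediate specialization of Proposition \ref{prad}. The forward implication (commutative $\Rightarrow$ centrally stable) is already covered by Example \ref{et2}, so there is nothing to do in that direction. For the converse, suppose $A$ is a nilpotent algebra that is centrally stable. First I would observe that $A$ itself can serve as the nilpotent ideal $N$ in the hypothesis of Proposition \ref{prad}: the choice $N := A$ is a nilpotent ideal of $A$, and trivially $A = Z(A) + N$ since $N = A$. Proposition \ref{prad} then yields that the only centrally stable elements of $A$ are the central ones. But central stability of the \emph{algebra} $A$ means, by the remark following Definition \ref{d2}, that \emph{every} element of $A$ is centrally stable; hence every element of $A$ lies in $Z(A)$, i.e.\ $A = Z(A)$, which says precisely that $A$ is commutative.

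There is essentially no obstacle to overcome here: all of the real content has already been placed in Proposition \ref{prad}, and the corollary is just the case $N = A$ combined with the elementary equivalence ``$A$ centrally stable $\iff$ all elements of $A$ centrally stable''. If one preferred to bypass Proposition \ref{prad}, one could argue directly in the same spirit: writing $A^n = 0$ with $n$ minimal, assume $A$ is non-commutative, pick the largest $j$ with $A^j \not\subseteq Z(A)$ and a non-central $r \in A^j$; since such an $r$ would have to be centrally stable, Proposition \ref{pe}(iii) forces $r \in Z(A) + {\rm Id}([r,A]) \subseteq Z(A) + A^{j+1} \subseteq Z(A)$, a contradiction. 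Appealing to Proposition \ref{prad} is, however, the cleaner route.
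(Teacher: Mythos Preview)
Your argument is correct and is exactly the paper's intended one: the corollary is stated there as an immediate consequence of Proposition~\ref{prad}, obtained by taking $N=A$, together with Example~\ref{et2} for the easy direction. (A tiny quibble: in the biconditional ``centrally stable $\iff$ commutative'' the implication you call ``forward'' is really the backward one, but the mathematics is unaffected.)
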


\begin{remark} \label{r11}
If $C$ is a commutative finite-dimensional  unital algebra, then, by Corollary \ref{cc},  $A=M_n(C)$ is a centrally stable algebra. 
Note that 
$\rad(A)=M_n(\rad(C))$  is not commutative, and hence not centrally stable, as long as $n\ge 2$ and $\rad(C)^2 \ne 0$. This proves  the following two facts:
\begin{enumerate}
\item[(a)] An ideal of a centrally stable algebra may not be centrally stable.
\item[(b)] The algebra of $n\times n$ matrices over a commutative algebra without unity may not be centrally stable.
\end{enumerate}
 \end{remark}

We now turn towards our main goal, the proof of Theorem \ref{tmain}. Let us begin with  two examples  which helped us to conjecture the crucial lemma
following it.

\begin{example}\label{exg}
Let $D$ be a central division $F$-algebra, and let $A$ be the subalgebra of $M_2(D)$ consisting of matrices of the form  $\left[\begin{smallmatrix} a & b  \cr
0 & a \cr
 \end{smallmatrix} \right]$ with $a,b\in D$. Note that $V=\left[\begin{smallmatrix} 0 & D  \cr
0 & 0 \cr
 \end{smallmatrix} \right]$ is the only proper nonzero ideal of $A$ and  $A/V\cong D$. 
In particular, $A/V$ is a central algebra which implies that
 $A$ is centrally stable. Let us point out that $A$ itself is not central since 
$\left[\begin{smallmatrix} 0 & 1  \cr
0 & 0 \cr
 \end{smallmatrix} \right]\in Z(A)$.
\end{example}

\begin{example} \label{exh}
The real vector space $V=\mathbb C$ becomes a  $(\mathbb C,\mathbb C)$-bimodule by defining
$\lambda \cdot v = \lambda v$ and $v\cdot \mu = \overline{\mu} v$ for all $\lambda,\mu\in\mathbb C$ and $v\in V$. Let $A$ be the real algebra
of all  
 matrices  of the form  $\left[\begin{smallmatrix} \lambda & v  \cr
0 & \lambda \cr
 \end{smallmatrix} \right]$ with $\lambda\in \mathbb C$ and $v\in V$.
Observe that  $Z(A)$ consists of real scalar multiples of the identity (so $A$ is a central $\mathbb R$-algebra) and that $\rad(A)$ consists of matrices of the form $\left[\begin{smallmatrix} 0 & v  \cr
0 & 0 \cr
 \end{smallmatrix} \right]$ with $v \in V$. 
Note that $A/\rad(A)\cong \mathbb C$, so $A/\rad(A)$ is a proper field extension of the base field. 
 Therefore, $A$ is not centrally stable. We also remark that both $\rad(A)$ and $A/\rad(A)$ are commutative, and hence centrally stable algebras.
\end{example}

Example \ref{exg} may be viewed as an illustration of the next lemma, while Example \ref{exh} indicates the necessity of its assumptions.

\begin{lemma}\label{lll}Let $D$ be a finite-dimensional division $F$-algebra and let $V$ be a nonzero unital $(D,D)$-bimodule such that $cx=xc$ for all $c \in Z(D)$ and $x \in V$. If $V$ is finite-dimensional as a right vector space over $D$, then there exists a basis of  the right vector space $V$ over $D$ which is contained in the center of $V$.
In particular, $V$ has a nonzero center.
\end{lemma}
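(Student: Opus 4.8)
The plan is to recognize $V$ as a module over a suitable \emph{simple} algebra and then appeal to semisimple module theory. Write $K=Z(D)$; since $D$ is finite-dimensional over $F$, $K$ is a finite field extension of $F$, and $D$ is a central simple $K$-algebra, say of dimension $n=\dim_K D$. The hypothesis that $cx=xc$ for all $c\in K$ and $x\in V$ says precisely that the left and right $K$-module structures on $V$ coincide; hence $V$ becomes a (left) module over the $K$-algebra $E:=D\otimes_K D^{\mathrm{op}}$ via $(d_1\otimes d_2)\cdot x=d_1xd_2$. Note that the right $D$-module structure of $V$ is recovered from this action as $x\cdot d=(1\otimes d)\cdot x$, and that the center $Z(V):=\{x\in V:dx=xd\ \text{for all}\ d\in D\}$ consists exactly of those $x$ annihilated by every $d\otimes 1-1\otimes d$, $d\in D$.

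Next I would invoke the structure theory. The algebra $E=D\otimes_K D^{\mathrm{op}}$ is central simple over $K$, and the natural homomorphism $E\to\mathrm{End}_K(D)$, $d_1\otimes d_2\mapsto(x\mapsto d_1xd_2)$, is an isomorphism (this is the content of \cite[Corollary 4.28]{INCA}, applied with $K$ in place of $F$). Under this identification $D$ itself, with the action above, is the unique simple $E$-module up to isomorphism, and it is finite-dimensional over $F$. Since $V$ is finite-dimensional as a right vector space over $D$, we have $\dim_F V=(\dim_D V)(\dim_F D)<\infty$, so $V$ is a finite direct sum of simple $E$-modules; as $V\neq 0$, this yields an isomorphism of $E$-modules $V\cong D^{\oplus m}$ with $m\geq 1$ (in fact $m=\dim_D V$).

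Finally I would transport the obvious basis back. An isomorphism of $E$-modules $V\cong D^{\oplus m}$ is in particular an isomorphism of right $D$-modules, and it carries $Z(V)$ onto the center of $D^{\oplus m}$. In $D^{\oplus m}$ the $m$ elements having $1$ in one coordinate and $0$ elsewhere form a basis of the right vector space $D^{\oplus m}$ over $D$, and each of them lies in the center because $1\in K=Z(D)$ commutes with all of $D$. Pulling this basis back through the isomorphism produces a basis of the right $D$-vector space $V$ contained in $Z(V)$; in particular $Z(V)\neq 0$.

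The only real subtlety — and the reason the hypothesis on central elements cannot be dropped — is the passage to a module over $D\otimes_K D^{\mathrm{op}}$ rather than over $D\otimes_F D^{\mathrm{op}}$: the former is simple, whereas the latter need not be (for instance $\mathbb{C}\otimes_{\mathbb{R}}\mathbb{C}^{\mathrm{op}}\cong\mathbb{C}\times\mathbb{C}$), which is exactly the phenomenon behind Example \ref{exh}. Once $V$ is seen to be a module over the simple algebra $E$, the remainder is routine.
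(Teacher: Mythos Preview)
Your argument is correct. The passage from the $(D,D)$-bimodule $V$ to a left module over $E=D\otimes_{K}D^{\mathrm{op}}$ (with $K=Z(D)$) is exactly what the hypothesis $cx=xc$ enables, and once $E\cong\mathrm{End}_{K}(D)$ is invoked the rest is straightforward semisimple module theory: $V\cong D^{\oplus m}$ as $E$-modules, the standard ``unit vectors'' are central, and the isomorphism respects both the right $D$-structure and the center.

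The paper's proof reaches the same conclusion by a different, more explicit route. Fixing a right $D$-basis $\{v_1,\dots,v_n\}$ of $V$, the left action of $D$ is encoded by a unital $K$-algebra homomorphism $\varphi:D\to M_n(D)$, $xv_j=\sum_i v_i\varphi_{ij}(x)$; the Skolem--Noether Theorem then gives an invertible $A=(a_{ij})\in M_n(D)$ with $\varphi(x)=A(xI_n)A^{-1}$, and a short computation shows that the vectors $u_j=\sum_i v_i a_{ij}$ form the desired central basis. Conceptually, the two arguments are the same theorem in two guises: your decomposition $V\cong D^{\oplus m}$ as $E$-modules is precisely the statement that the left $D$-action is conjugate, in $M_n(D)$, to the scalar embedding. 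Your approach is more structural and arguably cleaner; the paper's is more self-contained and produces the change-of-basis matrix explicitly. Your closing remark about why one must tensor over $K$ rather than $F$ (so that $E$ is simple) is a nice diagnosis of why Example~\ref{exh} fails.
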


\begin{proof}
Let $\{v_1,\dots,v_n\}$ be a basis of the right vector space $V$ over $D$. Given any $x\in D$ and $1\le j\le n$, there exists a unique $\varphi_{ij}(x)\in D$ such that
\begin{equation} \label{e1}xv_j = \sum_{i=1}^n v_i \varphi_{ij}(x).\end{equation}
Hence, 
$$(xy)v_j = \sum_{i=1}^n v_i \varphi_{ij}(xy)$$
for all $x,y\in D$.
On the other hand,
\begin{align*}
(xy)v_j = x(yv_j) = \sum_{k=1}^n (xv_k) \varphi_{kj}(y)
= \sum_{i=1}^n \sum_{k=1}^n v_i \varphi_{ik}(x)\varphi_{kj}(y).
\end{align*}
Comparing both expressions, we get
$$\varphi_{ij}(xy)=\sum_{k=1}^n \varphi_{ik}(x)\varphi_{kj}(y).$$
Since, by assumption, elements of $V$ commute with elements of $Z(D)$, it follows from (\ref{e1}) that for all $x \in V$, $c \in Z(D)$ and $1\le j\le n$, 
$$\sum_{i=1}^nv_i\varphi_{ij}(cx)=(cx)v_j=c(xv_j)=c \left(\sum_{i=1}^n v_i \varphi_{ij}(x)\right)= \sum_{i=1}^nv_i(c\varphi_{ij}(x)).$$
This shows that for all  $1 \leq i,j\leq n$, the maps $x\mapsto \varphi_{ij}(x)$ are $Z(D)$-linear. Observe also that \eqref{e1} implies that $\varphi_{ij}(1)=\delta_{ij}$. Hence, if we consider $D$ and $M_n(D)$ as algebras over $Z(D)$, we conclude that 
the map $\varphi:D\to M_n(D)$ given by
$$\varphi(x)= \big(\varphi_{ij}(x)\big)$$
is a $Z(D)$-algebra homomorphism (which sends the unity $1$ of $D$ to the unity $I_n$ of $M_n(D)$). We may consider $D$ as a subalgebra of $M_n(D)$ (by identifying $x\in D$ with the scalar matrix $xI_n\in M_n(D)$). Since $D$ is finite-dimensional as an $F$-algebra, it  is also finite-dimensional as a $Z(D)$-algebra. The Skolem-Noether Theorem \cite[Theorem 4.46]{INCA} therefore tells us that there exists an invertible matrix $A\in M_n(D)$ such that $$\varphi(x)= A (xI_n) A^{-1}$$ for every $x\in D$. 
Using this along with \eqref{e1}, we see that $$v = [v_1\,v_2\,\dots\,v_n]\in V^n$$ satisfies
$$ \big((xI_n) v^t\big)^t = v \varphi(x) = vA (xI_n) A^{-1}$$
for all $x\in D$. Multiplying from the right by $A$, we obtain
$$ \big((xI_n) v^t\big)^tA= vA (xI_n).$$
Writing
$A=(a_{ij})$, we thus have 
$$[xv_1\,xv_2\,\dots\,xv_n](a_{ij}) = [v_1\,v_2\,\dots\,v_n](a_{ij}x),$$
that is,
$$x\Big(\sum_{i=1}^n v_ia_{ij} \Big) = \Big(\sum_{i=1}^n v_ia_{ij} \Big)x$$
for all $x\in D$ and all $1\le j\le n$. This means that each $u_j=\sum_{i=1}^n v_ia_{ij} $ lies in the center of $V$. Further, since $A$ is invertible, it is clear that $\{u_1, \dots, u_n\}$ forms a basis of the right vector space $V$ over $D$. 
\end{proof}



\begin{lemma}\label{mt2}
Let $A$ be a finite-dimensional unital algebra whose center is a field. Then $A$ is centrally stable if and only if $A$ is simple.
\end{lemma}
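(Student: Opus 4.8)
One direction is essentially free: if $A$ is simple, then it is centrally stable by Example \ref{et1}, so there is nothing to prove. The substance is the forward implication: assuming $A$ is a finite-dimensional unital algebra, $Z(A)$ is a field, and $A$ is centrally stable, I want to conclude that $A$ is simple. The first observation is that since $Z(A) =: K$ is a field, $A$ is in particular a finite-dimensional algebra over the field $K$, and being centrally stable over $F$ forces $A$ to be centrally stable over $K$ as well (the epimorphisms are the same maps, and the center is the same set). So I may as well rename $K$ as the ground field and assume from the start that $A$ is a \emph{central} finite-dimensional algebra that is centrally stable; the goal becomes: a central, centrally stable, finite-dimensional unital algebra is simple.

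Now I use the structure theory. By the Remark following the Example on $B + \F(V)$, a central centrally stable algebra has a unique maximal ideal $M$; equivalently $A/M$ is simple central. Next I bring in the radical $N = \rad(A)$. Since $A/N$ is semisimple and has a unique maximal ideal, $A/N$ must itself be simple, so $N = M$ is the unique maximal ideal and $A/N$ is a central simple algebra. The heart of the argument is to show $N = 0$. Suppose not. Let $j$ be the largest integer with $N^j \neq 0$, so $V := N^j$ is a nonzero ideal with $N V = V N = 0$; in particular $V$ is annihilated on both sides by $N$, so the left and right $A$-module structures on $V$ factor through $\bar A := A/N$, making $V$ an $(\bar A, \bar A)$-bimodule. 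By Wedderburn's principal theorem (valid since $F$ here is perfect — this is where perfectness is used), $A$ contains a copy of $\bar A$ as a subalgebra complementing $N$. Write $\bar A \cong M_n(D)$ with $D$ a central division $F$-algebra. A bimodule over $M_n(D)$ is, up to Morita-type reduction, a direct sum of copies of a $(D,D)$-bimodule; concretely, using the matrix units $e_{ii} \in \bar A \subseteq A$, I can pass to $W := e_{11} V e_{11}$, which is a nonzero unital $(D,D)$-bimodule, finite-dimensional as a right $D$-vector space.

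Here is where Lemma \ref{lll} enters, and it is the crux. To apply it I need the compatibility hypothesis $c w = w c$ for all $c \in Z(D)$, $w \in W$. This should follow from central stability: the point is that $Z(D) \hookrightarrow Z(\bar A)$ sits inside the image of $Z(A)$ under $A \to \bar A$ — but $Z(A) = F$, so elements of $Z(D)$, viewed inside $A$ via the Wedderburn complement, are central in $A$ modulo $N$, and acting on $V \subseteq N^j$ this forces the two-sided actions of $Z(D)$ to agree (any commutator $[c, x]$ with $c$ central-mod-$N$ and $x \in N^j$ lies in $[\,N^{j-1}, \ldots\,]$ pushing it into $N^{j+1} = 0$; more carefully, central stability of the central-mod-radical lift of $c$ pins this down). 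Granting the hypothesis, Lemma \ref{lll} produces a nonzero element $w_0 \in W$ in the center of $W$, i.e. $d w_0 = w_0 d$ for all $d \in D$. Lifting through the matrix units, $\sum_i e_{i1} w_0 e_{1i} =: v_0$ is then a nonzero element of $V$ commuting with all of $\bar A$, hence with all of $A$ (since it also commutes with $N$, being in $N^j$ and $N V = 0$). Thus $v_0 \in Z(A) = F$, but $v_0 \in N$ and $F \cap N = 0$ — contradiction. Therefore $N = 0$, $A = \bar A$ is simple, and the proof is complete.

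**Main obstacle.** The delicate step is the bimodule-compatibility hypothesis needed to invoke Lemma \ref{lll}: proving that $Z(D)$ acts the same way on $V$ from the left and from the right. This is exactly where central stability (as opposed to mere centrality) must be used, and it is the content that Example \ref{exh} shows cannot be dropped — there $Z(D) = \mathbb{C}$ acts by conjugation on the right, the hypothesis of Lemma \ref{lll} fails, and indeed $A$ is not centrally stable. So I expect the real work to be extracting, from the centrally-stable hypothesis applied to the quotient $A/N^{j+1}$ (or directly to suitable commutator ideals via Proposition \ref{pe}), the statement that every element of $A$ that is central modulo $\rad(A)$ commutes with $\rad(A)^j$; a secondary technical point is the careful Morita-style reduction from $M_n(D)$-bimodules to $D$-bimodules, which is routine but must be written with care so that the recovered central element of $V$ genuinely lands in $Z(A)$.
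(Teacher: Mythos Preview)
Your overall architecture mirrors the paper's: reduce to $A$ central, show $A/\rad(A)\cong M_n(D)$ with $D$ central, pass to the last nonzero power of the radical as a $(D,D)$-bimodule, invoke Lemma~\ref{lll} to produce a nonzero central nilpotent, and contradict the hypothesis that $Z(A)$ is a field. That is exactly the paper's strategy.

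There is, however, a genuine gap. You invoke Wedderburn's Principal Theorem to embed $\bar A$ inside $A$, and you explicitly flag that this needs $F$ perfect. But Lemma~\ref{mt2} carries \emph{no} perfectness hypothesis, and it is used downstream (in Lemma~\ref{tt} and Theorem~\ref{mm}) still without that hypothesis; perfectness enters only at Theorem~\ref{tmain}. So as written your argument proves a weaker statement than required. The paper sidesteps this entirely: instead of lifting all of $\bar A$, it lifts only a system of matrix units modulo the nilpotent radical (this is \cite[Proposition~13.13]{Row}, valid over any field), obtaining $A=M_n(B)$ with $B/R\cong D$ and $\rad(A)=M_n(R)$. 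One then works inside $B$ rather than attempting a Morita-style reduction with $e_{11}Ve_{11}$; Lemma~\ref{lll} is applied to $V=R^{m-1}$ directly as a $(D,D)$-bimodule, and the resulting central $v\in Z(B)$ gives $vI_n\in Z(A)\cap\rad(A)$.

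This reorganization also dissolves the ``main obstacle'' you identify. In the paper's setup, $B$ inherits central stability from $A=M_n(B)$ by Proposition~\ref{l}(a), so every element of $Z(D)=Z(B/R)$ is the image of some $c\in Z(B)$; since $RV=VR=0$, the two actions of $c+R$ on $V$ agree. In your framework the same point is actually simpler than you make it: central stability applied to $A\twoheadrightarrow\bar A$ gives $Z(\bar A)=F$, hence $Z(D)=F$, and the compatibility hypothesis of Lemma~\ref{lll} is then automatic for scalars. Your discussion involving commutators landing in $N^{j+1}$ is unnecessary.
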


\begin{proof} We only have to prove the ``only if'' part. So, assume that 
 $A$  is centrally stable. Since $Z(A)$ is a field,  $A$ may be regarded as a central algebra over $Z(A)$.
The central stability assumption implies that 
 $A/\rad(A)$ is also central. Thus, $A/\rad(A)$ is a semisimple algebra having no nontrivial central idempotents. Therefore, by  
Wedderburn's Theorem, there exist a finite-dimensional  division  algebra $D$ over $F$ and 
a positive integer $n$ such that $A/\rad(A)=M_n(D)$.  

It remains to show that $\rad(A)=0$. Assume, on the contrary,  that $\rad(A)\neq 0$. Using \cite[Proposition 13.13]{Row}, it follows
 that there exist a unital algebra $B$ and an ideal $R$ of $B$ such that 
$$A= M_n(B),\,\,\,\,B/R=D,\,\,\,\,\mbox{and}\,\,\,\,\rad(A)=M_n(R).$$ 
Since $A$ is centrally stable,  Proposition \ref{l}\,(a) implies that so is $B$. Further, since  $\rad(A)$ is nilpotent, so is $R$.
 Let $m\ge 2$ be such that  $V= R^{m-1}\ne 0$ and $R^m=0$.
 For each $b + R\in D$ and $v \in V$, define
$$(b+ R) v = bv\,\,\,\,\mbox{and}\,\,\,\, v(b+R)=vb.$$
Since $RV=VR=R^m=0$, these are well-defined operations from $D\times V$ to $V$ and $V\times D$ to $V$, respectively, which make $V$  a unital $(D,D)$-bimodule. Of course,
as a finite-dimensional vector space over $F$, $V$ is also  a finite-dimensional right (as well as left) vector space over $D$. Since $B$ is centrally stable, any central element of $D$ can be written as $c+R$ for some $c \in Z(B)$.
This implies that $(c+R)v=v(c+R)$ for all $c+R \in Z(D)$ and $v \in V$. Therefore, by Lemma \ref{lll}, there exists a $v\ne 0$ in $V$ such that $(b+ R) v = v(b+R)$, and hence $bv = vb$, for all $b\in B$. That is, $v$ lies in the center of $B$. Consequently, $vI_n\in \rad(A)$ lies in the center of $A$, which contradicts the assumption that $Z(A)$ is a field.
\end{proof}

Incidentally, Proposition \ref{End} shows that Lemma \ref{mt2} does not hold for infinite-dimensional algebras.

\begin{lemma}\label{tt}
Let $A$ be a  finite-dimensional unital algebra 
whose center does not contain nonzero nilpotent elements. Then $A$ is centrally stable if and only if it is semisimple.
\end{lemma}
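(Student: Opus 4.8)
The plan is to dispatch the easy implication first and then reduce the substantial one to Lemma \ref{mt2} by splitting off the center.

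For the ``if'' direction, if $A$ is semisimple then Wedderburn's Theorem presents $A$ as a finite direct product of simple algebras, each centrally stable by Example \ref{et1}; hence $A$ is centrally stable by Proposition \ref{dirsum}. (This is precisely the observation already recorded at the opening of Section \ref{s4}.)

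For the ``only if'' direction I would argue as follows. Suppose $A$ is centrally stable. The structural input we exploit is that $C:=Z(A)$ is a finite-dimensional commutative $F$-algebra containing no nonzero nilpotent element, i.e.\ $\rad(C)=0$. Being finite-dimensional, $C$ is Artinian, so it is a finite product of local Artinian rings; each such factor is reduced and local Artinian, hence a field. Thus $C\cong K_1\times\cdots\times K_r$ for finite field extensions $K_i$ of $F$. Letting $e_1,\dots,e_r\in C$ be the associated orthogonal central idempotents with $e_1+\cdots+e_r=1$, we obtain $A=Ae_1\times\cdots\times Ae_r$, where each $Ae_i$ is a finite-dimensional unital algebra with $Z(Ae_i)=Ce_i\cong K_i$, a field. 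By Proposition \ref{dirsum}, every direct factor $Ae_i$ is again centrally stable, so Lemma \ref{mt2} applies and tells us that each $Ae_i$ is simple. Therefore $A$ is a finite direct product of simple algebras, and so $A$ is semisimple.

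The only point requiring a modicum of care — and it is hardly an obstacle — is the decomposition $Z(A)\cong K_1\times\cdots\times K_r$: one must invoke that a finite-dimensional commutative reduced algebra over a field is a product of fields (no perfectness of $F$ is needed, since the argument only uses that a reduced Artinian local ring is a field). After that, the conclusion is an immediate assembly of Lemma \ref{mt2} and Proposition \ref{dirsum}, together with the verification — which is standard and which I would only mention in passing — that a central idempotent $e_i$ satisfies $Z(Ae_i)=Z(A)e_i$.
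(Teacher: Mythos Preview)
Your proof is correct and follows essentially the same approach as the paper: decompose the reduced finite-dimensional center $Z(A)$ as a product of fields $F_1\times\cdots\times F_r$, use the resulting central idempotents to split $A$ into direct factors $e_iA$ with $Z(e_iA)\cong F_i$, apply Proposition \ref{dirsum} to see each factor is centrally stable, and then invoke Lemma \ref{mt2} to conclude each factor is simple. The only difference is that you spell out in slightly more detail why a reduced Artinian commutative algebra is a product of fields, which the paper leaves as a one-line remark.
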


\begin{proof}
Our assumption on the center can be stated as that $Z(A)$ is a semisimple algebra, and hence isomorphic to $F_1\times\cdots\times F_r$ where each $F_i$ is a finite field extension of the base field $F$.
Therefore, there exist orthogonal central idempotents $e_1,\dots,e_r$ such that $e_1+\dots+e_r =1$ and hence $A=e_1A\oplus\dots\oplus e_rA$, and $Z(e_iA)=F_ie_i$.
 Assume now that $A$ is centrally stable. Then each $e_iA$ is centrally stable (by Proposition \ref{dirsum}) and so Lemma \ref{mt2} implies that it is simple. Consequently, $A$ is semisimple.
The converse statement is trivial.
\end{proof}

Another way of stating Lemma \ref{tt} is as follows: If $A$ is a finite-dimensional centrally stable unital algebra such that $\rad(A)\ne 0$, then $Z(A)\cap \rad(A)\ne 0$.
The next theorem gives a sharper conclusion.

\begin{theorem}\label{mm}
Let $A$ be a  finite-dimensional  unital algebra. If $A$ is centrally stable, then $\rad(A)={\rm Id}\big(Z(A)\cap \rad(A)\big)$. 
\end{theorem}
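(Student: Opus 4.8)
The plan is to prove the nontrivial inclusion $\rad(A)\subseteq {\rm Id}\big(Z(A)\cap\rad(A)\big)$ by induction on the nilpotency index of $\rad(A)$, peeling off the top power of the radical at each step. Write $N=\rad(A)$, $Z=Z(A)$, and let $m\ge 1$ be minimal with $N^{m}=0$; I induct on $m$. If $m\le 1$ there is nothing to prove, so assume $m\ge 2$ and set $V=N^{m-1}$, which is a nonzero ideal with $NV=VN=0$, so $V$ is naturally a bimodule over $A/N$. First I would show that $V\cap Z\ne 0$. Since $A$ is centrally stable, by Proposition~\ref{l}(a) applied through Wedderburn's theory one may, after decomposing $A/N$ into its simple blocks (equivalently, working block by block via the central idempotents lifted as in Lemma~\ref{tt} and Lemma~\ref{mt2}), reduce to the situation where $A/N=M_n(D)$ for a finite-dimensional division algebra $D$ and $A=M_n(B)$ with $B/R=D$ and $N=M_n(R)$, exactly as in the proof of Lemma~\ref{mt2}. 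Central stability of $B$ forces every element of $Z(D)$ to lift to $Z(B)$, so the $(D,D)$-bimodule $V_0=R^{m-1}$ satisfies the commuting hypothesis of Lemma~\ref{lll}; that lemma produces a $D$-basis of $V_0$ inside the center of $B$, hence $V=M_n(V_0)$ contains (the scalar matrix on) a nonzero central element of $A$. Thus $V\cap Z\ne 0$.

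Next I would upgrade this to $V\subseteq {\rm Id}(V\cap Z)$. Here the point is that Lemma~\ref{lll} gives not merely one central element but a full right $D$-basis $u_1,\dots,u_n$ of $V_0$ lying in $Z(B)$; correspondingly $V=M_n(V_0)$ is spanned over $B$ (acting by left and right multiplication) by the central elements $u_jI_n$, and since $V\cdot N=N\cdot V=0$ the ideal of $A$ generated by the $u_jI_n$ contains all of $V$. Unwinding the block reduction, this says precisely $\rad(A)^{m-1}\subseteq {\rm Id}\big(Z(A)\cap\rad(A)\big)$, with the ideal generators lying in $V$.

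Now I would pass to the quotient $\bar A=A/I$ where $I={\rm Id}(V\cap Z)\supseteq V=N^{m-1}$. By Proposition~\ref{homim}, $\bar A$ is again finite-dimensional, unital, and centrally stable, and $\rad(\bar A)=N/I$ has nilpotency index strictly less than $m$ (since $N^{m-1}\subseteq I$). By the induction hypothesis, $\rad(\bar A)={\rm Id}_{\bar A}\big(Z(\bar A)\cap\rad(\bar A)\big)$. The final step is to pull this back: every element of $Z(\bar A)\cap\rad(\bar A)$ is the image of some $a\in N$ with $a+I\in Z(\bar A)$, and central stability of $A$ (Proposition~\ref{ldef}(ii), or equivalently Proposition~\ref{pe}) gives $a\in Z(A)+I$; writing $a=z+i$ with $z\in Z$ and $i\in I$, and using $a\in N$, $I\subseteq N$, we get $z\in Z\cap N$, so $\bar a$ is the image of $z\in Z(A)\cap\rad(A)$. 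Hence ${\rm Id}_{\bar A}\big(Z(\bar A)\cap\rad(\bar A)\big)$ is the image of ${\rm Id}\big(Z(A)\cap\rad(A)\big)$, and therefore $N={\rm Id}\big(Z(A)\cap\rad(A)\big)+I={\rm Id}\big(Z(A)\cap\rad(A)\big)$, as $I$ is itself generated by elements of $Z(A)\cap\rad(A)$.

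The main obstacle I anticipate is the block-decomposition step: central stability and the radical interact cleanly only after splitting $A$ along the central idempotents of $A/\rad(A)$, but these idempotents need not lift to central idempotents of $A$ in general, so some care (à la the reduction in Lemma~\ref{mt2}, invoking \cite[Proposition 13.13]{Row}) is needed to realize each block as a full matrix algebra $M_n(B)$ over a local-type ring $B$ with $B/R$ a division algebra, and to check that the bimodule hypothesis of Lemma~\ref{lll}—namely that central elements of $D$ act the same way from the left and right on $R^{m-1}$—really does follow from central stability of $B$. Once that bridge to Lemma~\ref{lll} is in place, the rest is a routine induction and quotient argument.
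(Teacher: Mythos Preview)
Your inductive scheme can be made to work, but the block-decomposition step you yourself flag is a real gap, and your pointers to Lemmas~\ref{tt} and~\ref{mt2} do not fill it: in Lemma~\ref{tt} the orthogonal central idempotents come from the standing hypothesis that $Z(A)$ is already semisimple, and in Lemma~\ref{mt2} there is only one simple block, so \cite[Proposition~13.13]{Row} applies directly. Neither hypothesis is available here. What you actually need is that the primitive central idempotents $\bar e_i$ of $A/\rad(A)$ lift to orthogonal central idempotents of $A$. This does follow from central stability---each $\bar e_i$ is the image of some $z_i\in Z(A)$, the $z_i$ are orthogonal idempotents modulo the nil ideal $Z(A)\cap\rad(A)$ of the commutative ring $Z(A)$, and hence lift there to genuine orthogonal idempotents summing to $1$---but you must supply that argument before you can invoke \cite[Proposition~13.13]{Row} on each block. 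Once that is in place, your upgrade step (using the full $D$-basis provided by Lemma~\ref{lll} rather than a single central vector) and the descent through the nilpotency index are correct.

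The paper's proof is much shorter and avoids both the induction and the block decomposition. It sets $I={\rm Id}\big(Z(A)\cap\rad(A)\big)$, assumes for contradiction that $I\subsetneq\rad(A)$, and applies Lemma~\ref{tt} \emph{once} to the centrally stable quotient $A/I$: since $\rad(A/I)\ne 0$, Lemma~\ref{tt} forces $Z(A/I)$ to contain a nonzero nilpotent, giving some $a\in\rad(A)\setminus I$ with $a+I\in Z(A/I)$. Central stability of $A$ then writes $a=c+u$ with $c\in Z(A)$ and $u\in I$; but $c=a-u\in\rad(A)$, so $c\in Z(A)\cap\rad(A)\subseteq I$ and hence $a\in I$, a contradiction. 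All the structural input (Lemma~\ref{lll} and the matrix reduction) is already encapsulated in Lemmas~\ref{mt2} and~\ref{tt}, so there is no need to unpack it again or to track the nilpotency index. Your route is more explicit about how $\rad(A)$ is built up layer by layer, but at the cost of redoing work already packaged in the preceding lemmas and of having to justify the idempotent lifting separately.
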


\begin{proof}
Suppose, on the contrary,  that $I={\rm Id}\big(Z(A)\cap \rad(A)\big)$ is a proper subset of $\rad(A)$. Then $A/I$ is a centrally stable algebra with nonzero radical. 
By Lemma \ref{tt}, there exists an $a\in A\setminus I$ such that  $a+I\in Z(A/I)\cap \rad(A/I)$. Note that this implies 
${\rm Id}([a,A]) \subseteq I$ and $a\in \rad(A)$. Since $a$ is centrally stable, Proposition \ref{pe} implies that there exist  a $c\in Z(A)$ and a $u\in I$  such that $a=c+u$. However, then $c=a-u\in 
\rad(A)$ and hence $c\in I$. Consequently, $a=c+u\in I$, which is a contradiction.
\end{proof}

We are now in a position to prove our main result.

\begin{theorem}\label{tmain}
Let $A$ be a  finite-dimensional  unital algebra over a perfect field $F$. The following conditions are equivalent:
\begin{enumerate}
\item[{\rm (i)}] 
 $A$ is centrally stable. 
\item[{\rm (ii)}] $\rad(A)={\rm Id}\big(Z(A)\cap \rad(A)\big)$.
\item[{\rm (iii)}]  There exist finite field extensions $F_1,\dots,F_r$ of $F$, commutative unital $F_i$-algebras $C_1,\dots, C_r$, and central simple $F_i$-algebras $A_1, \dots, A_r$
 such that $A\cong (C_1\otimes_{F_1} A_1)\times \cdots\times  (C_r\otimes_{F_r} A_r)$.
\end{enumerate}
\end{theorem}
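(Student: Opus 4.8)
The plan is to prove the cycle (i) $\Rightarrow$ (ii) $\Rightarrow$ (iii) $\Rightarrow$ (i). The implication (i) $\Rightarrow$ (ii) is exactly Theorem \ref{mm}, which has already been established, so nothing more is needed there. The implication (iii) $\Rightarrow$ (i) is the easy direction: a tensor product $C_i \otimes_{F_i} A_i$ of a commutative (hence centrally stable, by Example \ref{et2}) $F_i$-algebra with a central simple $F_i$-algebra is centrally stable by Proposition \ref{l}(b), and then a finite direct product of centrally stable algebras is centrally stable by Proposition \ref{dirsum}. So the whole content of the theorem is the implication (ii) $\Rightarrow$ (iii), and that is where I expect the real work — and the use of perfectness of $F$ — to lie.

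For (ii) $\Rightarrow$ (iii), first I would reduce to the case where the center is a field, i.e. where $A$ has no nontrivial central idempotents. Since $F$ is perfect, the Wedderburn–Malcev principal theorem gives a subalgebra $S$ of $A$ (a ``Wedderburn complement'') with $A = S \oplus \rad(A)$ and $S \cong A/\rad(A)$. Decompose $S$ into its simple blocks; the corresponding central idempotents $e_1, \ldots, e_r$ of $S$ need not be central in $A$, but a standard argument (lifting the central idempotents of $Z(A)$, which is itself a commutative finite-dimensional algebra, hence a product of local rings) lets me write $1 = f_1 + \cdots + f_r$ with orthogonal central idempotents $f_j \in Z(A)$ and $A = \bigoplus_j f_j A$. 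Central stability passes to each $f_j A$ by Proposition \ref{dirsum}, condition (ii) passes to each $f_j A$ as well (radical and center both split along the decomposition), and a direct product of algebras of the required form is again of the required form. So I may assume $Z(A)$ is connected, i.e. $Z(A)$ is a commutative local $F$-algebra with residue field some finite extension $F_1$ of $F$, and set $C := Z(A)$.

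Now the goal is to show $A \cong C \otimes_{F_1} A_1$ for a central simple $F_1$-algebra $A_1$. Here is where (ii) is used decisively: condition (ii) says $\rad(A)$ is generated as an ideal by $Z(A) \cap \rad(A) = \rad(C) = \mathfrak{m}$, the maximal ideal of $C$; in particular $\rad(A) = \mathfrak{m}A$. Passing to $A/\mathfrak{m}A$: this is a finite-dimensional algebra over $F_1 = C/\mathfrak{m}$ whose radical is $\rad(A)/\mathfrak{m}A$, which one checks is zero — so $\bar A := A/\mathfrak{m}A$ is semisimple, and moreover its center is the image of $Z(A) = C$, namely $F_1$, so $\bar A$ is central simple over $F_1$; call it $A_1 = M_n(D_1)$. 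It remains to lift this to an isomorphism $A \cong C \otimes_{F_1} A_1$. Using the Wedderburn–Malcev complement $S$ (finite-dimensional, $F$-perfect) one has $A = S \oplus \rad(A) = S \oplus \mathfrak{m}A$ and $S \cong A_1$ as rings; the $C$-algebra structure then makes $A$ into a free $C$-module (e.g. because $A$ is $C$-flat — being projective over the local ring $C$ after checking $C$-torsion-freeness via the block structure of $M_n$ over $B$ as in Lemma \ref{mt2}) of the right rank, and the multiplication map $C \otimes_{F_1} A_1 \to A$ coming from $C = Z(A) \hookrightarrow A$ and $S \cong A_1 \hookrightarrow A$ is a $C$-algebra homomorphism which is an isomorphism modulo $\mathfrak{m}$, hence (Nakayama, both sides finite free over $C$) an isomorphism.

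The main obstacle is this last lifting step: one must genuinely produce the copy of $A_1$ inside $A$ and know it is ``complementary'' to the center in the tensor-product sense. I would handle it by combining the Wedderburn–Malcev theorem (which needs $F$ perfect — this is the only place perfectness enters, and it is the reason the hypothesis is there) with the matrix description $A = M_n(B)$, $B/R = D_1$, $\rad(A) = M_n(R)$ from \cite[Proposition 13.13]{Row} exactly as in the proof of Lemma \ref{mt2}; condition (ii) forces $R = \mathfrak{m}B$ and, crucially, forces $B$ to be a commutative(!) local ring with $B/\mathfrak{m}B = D_1$ and $Z(B) = C$ — indeed if $B$ were noncommutative one could run the bimodule argument of Lemma \ref{mt2} to manufacture a central element of $A$ in $\rad(A)$ outside $\mathrm{Id}(Z(A)\cap\rad(A))$, contradicting (ii). Once $B$ is commutative one has $A = M_n(B) = B \otimes_{F_1} M_n(F_1)$ if $D_1 = F_1$; in general $D_1$ is a division algebra over $F_1$ and a parallel argument (or an application of Proposition \ref{l} combined with $B \otimes D_1$-type bookkeeping) yields $A \cong B \otimes_{F_1} M_n(D_1) = C \otimes_{F_1} A_1$, completing (ii) $\Rightarrow$ (iii).
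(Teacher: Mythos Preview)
Your handling of (i)$\Rightarrow$(ii) and (iii)$\Rightarrow$(i) is correct and matches the paper. The genuine gap is in (ii)$\Rightarrow$(iii). After reducing to $Z(A)$ local with maximal ideal $\mathfrak{m}$, you assert that $\bar A = A/\rad(A)$ has center equal to the image of $Z(A)$, namely $F_1$. But this is precisely central stability of $A$ for the quotient by $\rad(A)$ --- that is condition (i), not condition (ii), and nothing you have written derives it from (ii). The paper's key step here is to show first that $\rad(A) = C A_0$ (where $C = Z(A)\cap\rad(A)$ and $A_0$ is the Wedderburn complement), by iterating the inclusion $\rad(A)\subseteq C(A_0+\rad(A))$ and using nilpotence of $C$; from this $Z(A_0) \subseteq Z(A)$ is immediate, and the rest of the structure follows. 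You never establish this inclusion.

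Your fallback via $A = M_n(B)$ also breaks. You claim condition (ii) forces $B$ to be commutative, because otherwise the bimodule argument of Lemma~\ref{mt2} would manufacture a central element of $A$ in $\rad(A)$ lying \emph{outside} ${\rm Id}(Z(A)\cap\rad(A))$. But that argument only produces a nonzero central element in $\rad(A)$; such an element lies in $Z(A)\cap\rad(A)$ by definition and hence trivially in the ideal it generates, so no contradiction with (ii) arises. Worse, when $D_1 = B/R$ is a noncommutative division algebra, $B$ surjects onto $D_1$ and cannot be commutative at all; your last sentence concedes this but supplies no argument for that case. The paper avoids all of this: once $Z(A_0)\subseteq Z(A)$ is known, it decomposes $A$ along the central idempotents $e_i$ of $A_0$, sets $C_i = F_i + e_i C$ with $F_i = Z(e_iA_0)$, and verifies $e_iA \cong C_i \otimes_{F_i} A_i$ directly via the Artin--Whaples Theorem --- no Nakayama, no freeness claim, no case distinction on $D_i$.
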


\begin{proof} (i)$\implies$(ii). Theorem \ref{mm}.

(ii)$\implies$(iii). Since $F$ is perfect, we may use
  Wedderburn's Principal Theorem (see, e.g., \cite[p. 191]{Row}) which tells us that there exists a semisimple algebra $A_0$ contained in $A$ such that
$A$ is the vector space direct sum of $A_0$  and rad$(A)$. Let us write $C$ for $Z(A)\cap \rad(A)$. According to our assumption,  every 
 element  in $\rad(A)$ is  a sum of elements of the form $c(a+r)$ where $c\in C$, $a\in A_0$ and $r\in \rad(A)$. Now, $r$ can be further  written as a sum of elements of the form
 $c'(a'+r')$. Since  $$c(a+c'(a'+r'))=ca + cc'a' + cc'r' \in CA_0 + C^2 \rad(A)$$  and $C^n \subseteq \rad(A)^n =0$ for some $n$, continuing in this vein we arrive at
\begin{equation}\label{zz}\rad(A)= CA_0.\end{equation}

As $A_0$ is a  semisimple $F$-algebra, 
there exist idempotents $e_1,\dots,e_r\in Z(A_0)$ such that $e_1 + \dots + e_r =1$ and $A_i=e_iA_0$ is a simple $F$-algebra. Denote by $F_i$ the center of 
of $e_iA_0$. Of course, $F_i$ is a finite field extension of $F$ and $A_i$ is a central simple $F_i$-algebra. From \eqref{zz} it is evident that $Z(A_0)\subseteq Z(A)$, so $e_i$ are central idempotents in $A$ and $A$ is the direct sum of ideals $e_1A,\dots,e_rA$. 
Write $C_i$ for $F_i + e_iC$. Note that $F_i=F_ie_i\subseteq Z(A_0)\subseteq Z(A)$ and that $C_i$ is a commutative $F_i$-algebra.
Let us prove  that $$e_iA\cong C_i\otimes_{F_i} A_i.$$ It is clear that $e_iA$ is generated by $C_i\cup A_i$ and that the elements of $C_i$ commute with the elements of $A_i$. 
According to \cite[Lemma 4.24]{INCA}, it remains to show that
whenever
$a_1,\dots,a_m\in A_i$ are linearly independent over $F_i$ and $c_1,\dots,c_m\in C_i$ are such that 
$c_1a_1+\dots + c_ma_m=0$, then each $c_j=0$. To this end, we invoke the  Artin-Whaples Theorem which implies that 
for each $j$ there exists an $f_j\in M(A_i)$ such that $f_j(a_j)=e_j$ and $f_j(a_k)=0$ if $k\ne j$. Since 
$f_j$ obviously satisfies $f_j(ca)=cf_j(a)$ for all $c\in C_i$ and $a\in A_i$, it follows that
$$0= f_j(c_1a_1+\dots + c_ma_m)=c_je_j=c_j.$$ Thus, $e_iA\cong C_i\otimes_{F_i} A_i$ and 
$$A=e_1A\oplus\cdots \oplus e_rA\cong (C_1\otimes_{F_1} A_1)\times \cdots\times  (C_r\otimes_{F_r} A_r).$$

(iii)$\implies$(i). Propositions \ref{dirsum} and \ref{l}\,(b).
\end{proof}

Since, by Wedderburn's Theorem, each $A_i$ is isomorphic to $D_i\otimes_{F_i}M_{n_i}(F_i)$ for some $n_i\ge 1$ and a  central division $F_i$-algebra $D_i$,
we may state (iii) as 
$$A\cong M_{n_1}(C_1\otimes_{F_1} D_1)\times \cdots\times  M_{n_r}(C_r\otimes_{F_r} D_r).$$
If $F$ is algebraically closed, then $D_i=F_i=F$ and the theorem reduces to the following corollary.

\begin{corollary}
A  finite-dimensional  unital algebra $A$ over an algebraically closed field $F$ is centrally stable if and only if there exist 
positive integers $n_1,\dots,n_r$ and commutative unital $F$-algebras $C_1,\dots,C_r$ such that $$A\cong M_{n_1}(C_1)\times \cdots\times M_{n_r}(C_r).$$
\end{corollary}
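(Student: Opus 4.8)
The plan is to deduce the corollary directly from Theorem \ref{tmain} by specializing its conclusion to the algebraically closed case. An algebraically closed field is in particular perfect, so Theorem \ref{tmain} applies and tells us that $A$ is centrally stable if and only if $A\cong (C_1\otimes_{F_1} A_1)\times\cdots\times(C_r\otimes_{F_r} A_r)$, where each $F_i$ is a finite field extension of $F$, each $C_i$ is a commutative unital $F_i$-algebra, and each $A_i$ is a central simple $F_i$-algebra. I therefore only have to identify what each factor $C_i\otimes_{F_i} A_i$ becomes when $F$ is algebraically closed.

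The first simplification I would make is to observe that every finite field extension $F_i$ of $F$ coincides with $F$, since an algebraically closed field admits no proper finite (indeed, no proper algebraic) extension. Hence each $C_i$ is a commutative unital $F$-algebra and each $A_i$ is a central simple $F$-algebra. The second simplification uses Wedderburn's Theorem to write $A_i\cong M_{n_i}(D_i)$ for some central division $F$-algebra $D_i$; but the only finite-dimensional division algebra over an algebraically closed field is $F$ itself, because any $d\in D_i$ generates a finite field extension $F(d)$ of $F$, which must equal $F$, forcing $D_i=F$. Thus $A_i\cong M_{n_i}(F)$, and consequently
$$C_i\otimes_{F_i} A_i = C_i\otimes_F M_{n_i}(F)\cong M_{n_i}(C_i).$$
Assembling the factors yields $A\cong M_{n_1}(C_1)\times\cdots\times M_{n_r}(C_r)$, which establishes the forward implication.

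For the converse, I would start from the assumption $A\cong M_{n_1}(C_1)\times\cdots\times M_{n_r}(C_r)$ with each $C_i$ a commutative unital $F$-algebra (necessarily finite-dimensional, since $A$ is). By Corollary \ref{cc}, each matrix algebra $M_{n_i}(C_i)$ over the commutative unital algebra $C_i$ is centrally stable, and then Proposition \ref{dirsum} shows that their direct product $A$ is centrally stable as well.

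I expect no serious obstacle here, since essentially all the content is carried by Theorem \ref{tmain}: the specialization rests only on the two standard facts that an algebraically closed field has no proper finite extension and that its only finite-dimensional central division algebra is the field itself. The one point I would state with a little care is the algebra isomorphism $C_i\otimes_F M_{n_i}(F)\cong M_{n_i}(C_i)$, which is the familiar identification of matrices with entries in $C_i$.
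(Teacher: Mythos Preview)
Your proof is correct and follows the same route as the paper: specialize Theorem~\ref{tmain} using the facts that an algebraically closed field has no proper finite extensions and admits no nontrivial finite-dimensional division algebras, so that $F_i=D_i=F$ and each factor becomes $M_{n_i}(C_i)$. Your separate verification of the converse via Corollary~\ref{cc} and Proposition~\ref{dirsum} is also fine, though it is already implicit in the equivalence of Theorem~\ref{tmain}.
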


\end{document}